\documentclass[11pt]{amsart}

\usepackage{epsf,amssymb,amsmath,overpic,amscd,psfrag,stmaryrd,times, mycommands, mycommands2, mathrsfs, euscript,thmtools, amsbsy, mathtools}

\usepackage[style=alphabetic,backend=biber]{biblatex}

\usepackage[all]{xy}
\usepackage{hyperref, stmaryrd, adjustbox}
\usepackage[dvipsnames]{xcolor}
\usepackage{enumitem}
\usepackage{tikz,tikz-cd}
\usepackage{todonotes}

\usepackage{caption}
\usepackage{subcaption}

\input{biblatex.cfg}
\addbibresource{mybib.bib}
\addbibresource{mybib2.bib}

\makeatletter
\newcommand{\labitem}[2]{%
\def\@itemlabel{#1}
\item
\def\@currentlabel{#1}\label{#2}}
\makeatother

\begin{document}
\title[Morse homology and equivariance]
{Morse homology and equivariance} 

\author{Erkao Bao}
\address{School of Mathematics, University of Minnesota, Minneapolis, MN 55455}
\email{bao@umn.edu} 
\urladdr{https://erkaobao.github.io/math/}

\author{Tyler Lawson}
\address{School of Mathematics, University of Minnesota, Minneapolis, MN 55455}
\email{tlawson@math.umn.edu} 
\urladdr{https://www-users.cse.umn.edu/~tlawson/}

\keywords{}

\keywords{Morse theory, equivariant, stable critical point, group action, Morse-Smale, equivariant transversality, Bredon homology, Smith-inequality}

\subjclass[2010]{Primary 55N25; Secondary 55M35, 55N91, 53D40.}

\thanks{Erkao Bao is supported by NSF Grants DMS-2404529.}
\thanks{Tyler Lawson is supported by NSF Grants DMS-2208062.}

\begin{abstract}
In this paper, we develop methods for calculating equivariant homology from equivariant Morse functions on a closed manifold with the action of a finite group. We show how to alter a $G$-equivariant Morse function to a \emph{stable} one, where the descending manifold from a critical point $p$ has the same stabilizer group as $p$, giving a better-behaved cell structure on $M$. For an equivariant, stable Morse function, we show that a generic equivariant metric satisfies the Morse--Smale condition.

In the process, we give a proof that a generic equivariant function is Morse, and that equivariant, stable Morse functions form a dense subset in the $C^0$-topology within the space of all equivariant functions.

Finally, we give an expository account of equivariant homology and cohomology theories, as well as their interaction with Morse theory. We show that any equivariant Morse function gives a filtration of $M$ that induces a Morse spectral sequence, computing the equivariant homology of $M$ from information about how the stabilizer group of a critical point acts on its tangent space. In the case of a stable Morse function, we show that this can be further reduced to a Thom-Smale-Witten complex.
\end{abstract}

\maketitle

%\setcounter{tocdepth}{2}

%\tableofcontents
\section{Introduction}

Morse homology has been extensively studied, offering profound insights into manifold topology. However, despite significant progress, such as \cite{wasserman1969equivariant, hingston1981equivariant, austin1995morse,cho2014orbifold}, equivariant Morse homology remains relatively underdeveloped, primarily due to challenges in integrating symmetry and genericity.

Here are some of the challenges associated with the study of equivariant Morse functions on compact manifolds $M$:
\begin{itemize}
    \item Given an equivariant Morse function, the existence of an equivariant Riemannian metric ensuring the Morse--Smale property—where ascending and descending manifolds intersect transversely—is not guaranteed and is often obstructed.
    \item Classical Morse functions on $M$ are closely related to \emph{cell decompositions} of $M$. A gradient flow gives each critical point $p$ of index $k$ a descending manifold diffeomorphic to the open disk $D^k$. In the equivariant case, however, this disk inherits a potentially nontrivial linear action of the stabilizer group $\stab(p)$. This means that in the equivariant case, Morse functions are related to decompositions into \emph{representation cells}, rather than the ordinary cells which appear in the definition of a $G$-CW complex. Here, we recall that a $G$-CW complex is a CW complex on which $G$ acts by cellular maps such that the fixed point set of any element of $G$ is a subcomplex. In particular, it implies that inside the interior of each cell, the stabilizer is constant.
    \item As consequence of this difficulty, if a critical point $p$ is fixed by a subgroup $H \subset G$, then the Morse index $\morseIndex{f}(p)$ in $M$ may differ from the index $\morseIndex{f|_{M^H}}(p)$ in the fixed-point manifold $M^H$. For example, we may have a critical point $p$ which is of index $5$ in $M$ and of index $2$ in $M^G$, while $q$ has index $4$ in $M$ and index $3$ in $M^G$. This makes it impossible to give a reasonable definition of a ``self-indexing equivariant Morse function'' which is generic among Morse functions.
    \item There is a much wider variety of homology and cohomology theories in the equivariant case, including Borel-equivariant and Bredon theories, with more subtle interactions.
\end{itemize}

In this paper, we propose an approach to address these challenges. By equivariantly perturbing the Morse function, we induce a \emph{stability} property, which forces the descending manifold of a critical point $p$ to be pointwise fixed by the stabilizer group of $p$. This makes the critical points correspond to cells in a $G$-CW structure.

Under this stability condition, we also demonstrate that a generic equivariant Riemannian metric renders the function-metric pair Morse--Smale, enabling the calculation of equivariant Morse homology. Furthermore, the stability condition also implies that the Morse chain complex of the fixed point set $M^H$ becomes a subcomplex of the Morse chain complex of $M$.
%This phenomenon facilitates the localization of equivariant Morse homology and leads to the derivation of a Smith-type inequality. 
This approach also gives guidance on defining equivariant Morse homology for the non-Morse--Smale case without perturbing the Morse function, a special case of which is Kronheimer-Mrowka's definition of Morse homology for manifolds with boundaries, treated as closed manifolds with a $C_2$-action (see \cite{Kronheimer2007Monopoles}).

In the remainder of the paper, we give a brief introduction to equivariant homology and cohomology theories (both ``ordinary'' and ``generalized''), and how (co)homology interacts with equivariant Morse theory. Examples of ordinary equivariant cohomology theories (also known as Bredon cohomology), which satisfy an analogue of the dimension axiom, include:
\begin{align*}
    X &\mapsto H^*(X),\\
    X &\mapsto H^*(X^G),\\
    X &\mapsto H^*(X/G).
\end{align*}
Examples of generalized ones include the Borel and Tate cohomology theories. 
For any such equivariant (co)homology theory, we discuss how an equivariant Morse function makes $M$ homotopy equivalent to a $G$-space built by attaching the unit discs in representations of $G$. The increasing filtration of $M$ by level sets produces a Morse spectral sequence for calculating the equivariant (co)homology of $M$. The starting components of this spectral sequence are the equivariant (co)homology groups for these representation cells associated to the critical points of $M$.

In the case of a stable Morse function and an ordinary equivariant (co)homology theory, we show that we can reduce further. The (co)homology of $M$ is directly calculated by a \emph{Thom-Smale-Witten complex} involving the critical points of $M$ and their stabilizer groups.

Finally, the techniques developed in this paper can also be used to define orbifold (co)homology over integers \cite{cho2014orbifold}.

\subsection{Statements and outline}

Let $G$ be a finite group, and $M$ a smooth manifold of dimension $n$ with a smooth action of the group $G$. The vector space $C^\infty(M)$, of smooth functions $f\co M \to \R$, has an action of the group $G$ via $f^{\sigma} = f \circ \sigma$. The fixed subspace $C^k(M)^G$ is the space of \emph{equivariant} real-valued functions.

We denote the critical point set of $f$ by $\crit(f) = \{ p \mid df_p = 0 \}$, and for $p \in \crit(f)$ the {\em Hessian} of $f$ at $p$ by $\hess{f}(p)\co T_p M \times T_p M \to \R$.

%$$: T_p M \times T_p M \to \R$ such that $\hess{f}(p)(v_1, v_2)  = v_1(\tilde v_2(f))$, where $v_1, v_2 \in T_p M$, and $\tilde v_2$ is a vector field on $M$ extending $v_2$. that it is well defined (independent of the extension $\tilde v_2$ and it is symmetric.

\begin{definition}[Morse function]
    An \emph{equivariant Morse function} on $M$ is an element $f \in C^\infty(M)^G$ whose underlying map $f\co M \to \R$ is Morse: for all $p \in \crit(f)$, the Hessian $\hess{f}(p)$ is nondegenerate.
\end{definition}

We include a short alternative proof of the result of \cite[4.10]{wasserman1969equivariant} that equivariant Morse functions exist and are generic \footnote{After we posted the first version of this paper, it was pointed out that a part of our alternative proof is similar to the treatment in Section 3.4 of \cite{bai2020equivariant}.}. Many of the standard proofs in the nonequivariant setting rely on a transversality result which does not apply equivariantly. 

\begin{theorem}
    Equivariant Morse functions are generic in the $C^k$-topology for any $k\geq 2$. 
\end{theorem}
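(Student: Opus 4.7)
The plan is to show that equivariant Morse functions form an open and dense subset of $C^k(M)^G$; by the Baire property, this yields genericity. Openness is routine: the critical set of a Morse $f$ is finite, the equivariant implicit function theorem applied to $df$ tracks critical points under small equivariant $C^2$-perturbations, and nondegeneracy of the Hessian is an open condition on symmetric bilinear forms. The substantive task is density.

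The central reduction is a tangential/normal decomposition of the Morse condition at a point $p \in M^H$ with stabilizer $H$. The $H$-invariance of $df_p$ forces $df_p \in (T_p^\ast M)^H = (T_p M^H)^\ast$, so $df_p = 0$ iff $p$ is a critical point of $f|_{M^H}$. Writing $T_p M = T_p M^H \oplus N_p$ as a direct sum of $H$-representations (trivial isotypic summand versus its $H$-invariant complement), and using that there are no nonzero $H$-equivariant maps between the trivial summand and $N_p$, the Hessian $\hess f(p)$ is forced to be block-diagonal, splitting as $\hess(f|_{M^H})(p) \oplus Q_p$ with $Q_p$ the normal Hessian on $N_p$. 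Hence $p$ is Morse iff both pieces are nondegenerate.

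For density we induct on $\dim M$; after passing to the effective quotient of $G$, every proper $M^H \subsetneq M$ has strictly smaller dimension than $M$. Enumerate the conjugacy classes $(H)$ of stabilizers in order of increasing $\dim M^H$ and handle them one at a time. For each nontrivial $H$, the inductive hypothesis applied to the $W_H = N(H)/H$-action on $M^H$ produces a small $W_H$-equivariant perturbation making $f|_{M^H}$ Morse; this is lifted to a $G$-equivariant perturbation of $f$ via surjectivity of the restriction map $C^k(M)^G \to C^k(M^H)^{W_H}$, which follows from extending through an equivariant tubular neighborhood of $G \cdot M^H$ and averaging. The free stratum (critical points with trivial stabilizer) is handled by classical Morse genericity on the smooth quotient of the free part. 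Finally, at each critical point $p$ with stabilizer $H$, we adjust the normal Hessian $Q_p$ by adding an equivariant perturbation of the form (cutoff bump supported near $G \cdot p$) $\times$ ($H$-invariant quadratic form in local normal coordinates); the finite-dimensional space of $H$-invariant symmetric forms on $N_p$ is nonzero (it contains the restriction of any $H$-invariant inner product), and its nondegenerate elements form a dense open subset, so a small generic choice makes $Q_p$ nondegenerate.

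The main obstacle is to ensure that perturbations introduced at one stratum do not undo the Morse property previously established at strata with smaller fixed-point sets. This is resolved by multiplying each perturbation at stage $(H)$ by a high power of an equivariant squared-distance function to $\bigcup_{M^{H'} \subsetneq M^H} M^{H'}$, making it vanish to arbitrary order along that closed $G$-subset. This preserves the full $k$-jet, and in particular the Hessian, on every previously handled stratum, while still leaving enough freedom on the complement to carry out the local adjustments described above.
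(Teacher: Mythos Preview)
Your approach is correct in outline and genuinely different from the paper's. The paper avoids the orbit-type stratification entirely: it first proves a local result on a linear $G$-space $V$ by showing, via a Chinese-remainder argument, that $G$-invariant polynomials of some bounded degree $N$ surject onto $\stab(p)$-invariant $2$-jets at every $p\in V$, and then does a dimension count to see that the non-Morse polynomials in $P_N(V)^G$ form a set of measure zero. Global density follows by covering $M$ with equivariant coordinate charts and patching, exactly as in Milnor. Your route---induction on $\dim M$ along the isotropy stratification, using the block-diagonal form of the Hessian---is closer in spirit to Wasserman's original proof; it is more geometric and makes the role of isotropy explicit, at the cost of more bookkeeping between strata.

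One step does not work as written. At stage $(H)$ you take a perturbation $h$ from the inductive hypothesis making $f|_{M^H}$ Morse and then multiply by a power $d^k$ of the distance to the deeper strata. But $f|_{M^H} + d^k h$ need not be Morse: multiplication by $d^k$ alters $h$ on the open stratum as well, and being Morse is not an open condition near the \emph{non-Morse} function $f|_{M^H}$. The right argument is that no such multiplication is needed. After the earlier stages---in particular after the normal-Hessian adjustment at each critical point $p$ with $\stab(p)=H'\supsetneq H$---the full Hessian of $f$ on $T_pM$ is already nondegenerate; since it is $H$-invariant it is block-diagonal for the $H$-isotypic decomposition and hence restricts nondegenerately to $T_pM^H$. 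Thus $f|_{M^H}$ is already Morse in a neighborhood of every deeper stratum, the remaining bad set lies compactly in the open $(H)$-stratum, and you can take the perturbation to be supported there from the outset. A smaller technical point: $G\cdot M^H$ is generally not a submanifold (the conjugates $M^{gHg^{-1}}$ may meet), so your lifting step should be phrased via an $N_G(H)$-equivariant tubular neighborhood of $M^H$ rather than of $G\cdot M^H$.
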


To study Morse theory, we need an equivariant metric $g$ on $M$.
For a critical point $p\in \crit(f)$, let $\stableManifold{f}{\g}{p}$ denote the ascending manifold of $p$, and $\unstableManifold{f}{\g}{p}$ denote the descending manifold of $p$.

\begin{definition}[Morse--Smale condition]
    The pair $(f, \g)$ is defined to be {\em Morse--Smale} if, for all $p, q \in \crit (f)$, the ascending manifold $\unstableManifold{f}{\g}{p}$ of $p$ intersects the descending manifold $\stableManifold{f}{\g}{q}$ of $q$ transversely.
\end{definition}

For any Morse function $f$, it is well-known that a generic metric $\g$ makes the pair $(f,\g)$ Morse--Smale. However, this generalization does not hold in the presence of a group action. Indeed, for some equivariant Morse functions, there does not exist any equivariant metric that satisfies the Morse--Smale condition. To handle this problem, we perturb the Morse function.

For any $p \in M$ with stabilizer $\stab(p) = H$, the tangent space $T_p M$ has an induced action of $H$. It canonically decomposes as the direct sum of the fixed subspace $(T_p M)^H = T_p(M^H)$ and a complement $T'_p M$, the kernel of the averaging operator, i.e.,
\[
T'_p M = \{\vec w \in T_p M \mid \sum_{\sigma \in \stab(p)} d\sigma_p \vec w = 0\}.
\]
For any equivariant metric on $M$, $T'_p(M)$ is the orthogonal complement of the fixed space $T_p(M)^H$.

\begin{definition}
    A critical point $p$ of an equivariant Morse function $f$ is \emph{stable} if the Hessian $\hess{f}(p)$ is positive definite on $T'_p M$.
\end{definition}

\begin{definition}
    An equivariant Morse function $f$ is {\em stable} if all of its critical points are stable.
\end{definition}

\begin{remark}
    If $M$ has an equivariant metric, then we can take orthogonal complements. In this case, stability at a critical point $p$ with $\stab(p) = H$ asks that the negative eigenspace of the Hessian be contained inside the fixed subspace $T_p(M)^H$. Equivalently, the descending manifold of the Morse flow from $p$ is also $H$-fixed.

    In terms of the CW-decomposition of $M$ determined by the pair $(f,g)$, an equivariant Morse function usually decomposes $M$ into cells of the form $G \times_H V$ where $V$ is a finite-dimensional representation of $H$. Stability is the requirement that $M$ decomposes into cells of the form $G/H \times \R^n$.
\end{remark}

We give an alternative proof of the following result of \cite{mayer1989Ginvariant}\footnote{We thank Bernhard Hanke for pointing out that this result is already known in \cite{mayer1989Ginvariant}.}:
\begin{theorem}
\label{thm:stableperturbation}
    Given an equivariant Morse function $f$ on $M$, there exists an equivariant {\em stable} Morse function $f'$ that is arbitrarily close to $f$ in the $C^0$-topology and that agrees with $f$ except near the critical points of $f$.
\end{theorem}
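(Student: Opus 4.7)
The plan is to handle the finitely many $G$-orbits of critical points one at a time. Since $G$ is finite and $f$ is Morse, $\crit(f)$ is a finite union of $G$-orbits, so we may work in pairwise disjoint $G$-invariant neighborhoods. I construct the modification $H$-equivariantly near a representative $p$ of each orbit (where $H = \stab(p)$) and transport along $G$ to the other points of the orbit; disjointness of the chosen neighborhoods then produces a well-defined $G$-equivariant function $f'$.

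Fix $p$ and apply the $H$-equivariant Morse Lemma to obtain $H$-equivariant coordinates identifying a neighborhood with a ball $B \subset T_p M$ in which $f(x) = f(p) + \tfrac12\langle Qx, x\rangle$ for $Q = \hess{f}(p)$. Fix an $H$-invariant inner product on $T_p M$; this yields the orthogonal decomposition $T_p M = V^H \oplus V'$ and the block form $Q = Q_H \oplus Q'$. Let $R$ be an $H$-invariant positive-definite symmetric operator on $V'$, e.g.\ $R = C\cdot\mathrm{Id}_{V'}$ with $C > |\lambda_{\min}(Q')|$, and let $\alpha$ be an $H$-invariant smooth bump with $\alpha \equiv 1$ on $B_\delta$ and $\operatorname{supp}\alpha \subset B_{2\delta}$ for some small $\delta$. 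Set
\[
f'(x) := f(x) + \alpha(x)\cdot \tfrac12\langle (R-Q')v, v\rangle
\]
in the chart, extended by $f$ outside. Then $f'$ is $H$-equivariant, $\|f' - f\|_{C^0} = O(\delta^2)$, and $\hess{f'}(p) = Q_H \oplus R$ is positive-definite on $V'$, so $p$ is a stable Morse critical point for $f'$. On $B_\delta$ the function is a stabilized quadratic form with $p$ its only critical point; outside $\operatorname{supp}\alpha$ it equals $f$, whose only nearby critical point is $p$.

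The main obstacle is controlling critical points in the transition shell $B_{2\delta}\setminus B_\delta$. Rescaling $x = \delta y$ converts the critical-point equation into a $\delta$-independent problem on a fixed annulus, so it suffices to analyze one model. The delicate point is that a new critical point $q$ off $V^H$ typically has a strictly smaller stabilizer than $H$, so stability at $q$ requires positive-definiteness of $\hess{f'}(q)$ on the full orthogonal complement of $V^{\stab(q)}$ in $T_q M$, not just on the original $V'$-directions. The naive perturbation above need not achieve this, since it leaves the $V^H$-direction Hessian still governed by $Q_H$. To fix this, I would augment the perturbation by a quartic term of the form $\alpha(x)\cdot C'\|v\|^2 \langle Q_H^{-}u, u\rangle$, where $Q_H^{-} \geq 0$ is the negative part of $Q_H$ and $C'$ is large. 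This term vanishes to order four at $p$, so leaves $\hess{f'}(p)$ unchanged, and has $C^0$-norm $O(\delta^4)$; but at any candidate shell critical point with $v \neq 0$, its $uu$-Hessian contribution $2C'\|v\|^2 Q_H^{-}$ dominates the negative directions of $Q_H$, forcing any such new critical point to itself be stable. A final compactness and rescaling argument then verifies that the resulting $f'$ is Morse, stable, $C^0$-close to $f$, and agrees with $f$ outside the chosen neighborhoods.
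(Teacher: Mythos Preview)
Your construction has a genuine gap: as written, $f'$ is not Morse. Take the simplest case $V^H = 0$, $Q' = -\mathrm{Id}$, $R = C\,\mathrm{Id}$, and $\alpha$ radial. Then
\[
f'(v) - f(p) = \tfrac12\bigl((C+1)\alpha(|v|) - 1\bigr)\,|v|^2
\]
is a function of $|v|$ alone, so its critical set in the shell consists of entire spheres; your quartic term $C'\|v\|^2\langle Q_H^- u,u\rangle$ vanishes identically here since $u=0$. More generally, any $H$-invariant interpolation between two quadratic forms that depends only on norms of isotypic components retains extra orthogonal symmetry and produces degenerate critical manifolds in the shell. The sentence ``A final compactness and rescaling argument then verifies that the resulting $f'$ is Morse'' is asserting exactly the step that fails.

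The paper's argument confronts this directly and the mechanism is different from yours. It splits the \emph{negative} eigenspace as $W \oplus U$ with $W$ the $H$-fixed part and $U$ its complement, and interpolates only in the $U$-radial direction via an explicit odd cutoff $\phi$, obtaining $\|v\|^2 - \|w\|^2 - \|u\|^2\phi(\|u\|-2)$. This still has a whole sphere of critical points at some radius $t_0$ in $U$; the paper then \emph{breaks that spherical degeneracy} by adding $\epsilon\,\psi(\|u\|)\,h(u/\|u\|)$ for an $H$-equivariant function $h$ on the unit sphere $S(U)$. The new critical points are precisely $(0,0,t_0 u)$ for $u \in \crit(h)$, nondegenerate iff $h$ is Morse and stable iff $h$ is stable (using that $W$ is $H$-fixed by construction). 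Since $\dim S(U) < \dim M$, one chooses $h$ by induction on dimension. This inductive sphere-perturbation step is the idea your proposal is missing. As a side remark, your stated stability worry has the inclusion reversed: if $K = \stab(q) \subsetneq H$ then $V^K \supseteq V^H$, so $(V^K)^\perp \subseteq V'$, and the $Q_H$-negative directions, being $H$-fixed, are automatically $K$-fixed and cause no trouble.
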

We show:
\begin{theorem}[Equivariant Smale Theorem] \label{thm: morse smale is generic fixing a stable f}
    Let $k\geq 1$ be an integer. Let $f$ be an equivariant, stable Morse function of class $C^{k+1}$. Then, for a generic equivariant metric $g$ of class $C^k$, the pair $(f,g)$ is Morse--Smale.
\end{theorem}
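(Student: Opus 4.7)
The plan is a parametric transversality (Sard--Smale) argument over the space of $G$-invariant metrics, with stability of $f$ used to overcome the usual equivariance obstruction. Let $\mathcal{G}^k$ denote the Banach manifold of $G$-invariant $C^k$ Riemannian metrics on $M$, and for each pair $(p,q)$ of critical points form the universal moduli space
\[
\mathcal{M}(p,q)=\{\,(g,\gamma):g\in\mathcal{G}^k,\;\gamma\text{ a trajectory of }-\nabla_g f\text{ from }p\text{ to }q\,\}.
\]
Once $\mathcal{M}(p,q)$ is shown to be a Banach manifold, Sard--Smale applied to the projection $\mathcal{M}(p,q)\to\mathcal{G}^k$ gives a residual set of metrics $g$ for which $\unstableManifold{f}{g}{p}\pitchfork\stableManifold{f}{g}{q}$; intersecting over the finitely many pairs of critical points produces a residual set of Morse--Smale metrics.

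The essential use of stability is in reducing transversality in $M$ to a problem that can be handled by equivariant perturbations. Let $H=\stab(p)$. Stability at $p$ forces $\unstableManifold{f}{g}{p}\subseteq M^H$, so every trajectory $\gamma$ lies in $M^H$ and $H\subseteq\stab(q)$. For $x\in\gamma$ with $\stab(x)=H$, I claim transversality of $\unstableManifold{f}{g}{p}$ and $\stableManifold{f}{g}{q}$ in $M$ is equivalent to their transversality inside $M^H$. Since $T_x\unstableManifold{f}{g}{p}\subseteq T_xM^H$, the only nontrivial direction is that the ``normal-to-$M^H$'' component is automatic. This follows from stability at $q$, which gives $T'_qM\subseteq T_q\stableManifold{f}{g}{q}$: the $H$-averaging complement of $T_qM^H$ in $T_qM$ is contained in $T'_qM$ (because $H\subseteq\stab(q)$), hence lies in $T_q\stableManifold{f}{g}{q}$. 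Propagating along the $H$-equivariant stable-manifold parametrization of $\stableManifold{f}{g}{q}$ and counting $H$-isotypic dimensions along the common stratum forces the analogous inclusion at $x$.

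The heart of the argument is then surjectivity of the universal linearized operator, schematically
\[
(\eta,\delta g)\;\longmapsto\;\dot\eta+D(\nabla_g f)|_\gamma\,\eta\;-\;g^{-1}(\delta g)(\nabla_g f|_\gamma,\cdot),
\]
where $\eta$ ranges over equivariant sections of $\gamma^*TM$ and $\delta g$ over tangents to $\mathcal{G}^k$. By Fredholm duality, a failure of surjectivity would give a nonzero cokernel element $\xi$, which is $H$-invariant along $\gamma$. Picking $x\in\gamma$ with $\stab(x)=H$ and $\xi(x)\neq 0$, I construct $\delta g$ from a small $H$-invariant symmetric $2$-tensor bump supported in a neighborhood of $x$ disjoint from its non-trivial $G$-translates, spread $G$-equivariantly. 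Because $\nabla f(x)\in T_xM^H\setminus\{0\}$, the assignment $\delta g\mapsto\delta g(\nabla f(x),\cdot)$ surjects from $H$-invariant symmetric $2$-tensors at $x$ onto $(T_x^*M)^H$, so I can realize any prescribed element of $T_xM^H$ as the variation of $\nabla_g f$ at $x$; pairing against $\xi(x)$ produces a contradiction.

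The main obstacle is precisely this parametric transversality step: equivariance forces $\delta g$ to be $H$-invariant at each $H$-fixed point, so the induced variation of $\nabla_g f$ is confined to $T_xM^H$, whereas in the non-equivariant setting one may perturb in every direction. Stability of $f$ is exactly what resolves this difficulty: by the reduction above, both the trajectory $\gamma$ and the Morse--Smale obstruction already live in $M^H$, so the available equivariant perturbations match exactly the directions needed.
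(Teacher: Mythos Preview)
Your approach is sound and reaches the goal, but it takes a different route from the paper. Both arguments rest on the same reduction: for stable $f$ with $H=\stab(p)$, transversality of $\mathscr D(p)$ and $\mathscr A(q)$ in $M$ is implied by their transversality inside $M^H$. The paper isolates this as a lemma (showing $\mathscr A(q)\pitchfork M^H$ at $q$ from stability, then propagating by the flow) and then simply invokes the \emph{non-equivariant} Smale theorem on each fixed-point manifold $M^H$: perturb $g|_{M^H}$ near the critical points to achieve transversality there, extend $H$-equivariantly to a tubular neighborhood by averaging, and spread over $G$-translates. Your argument instead runs a direct parametric Sard--Smale over the Banach manifold of $G$-invariant metrics. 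The paper's route is shorter because it outsources the hard analysis to the classical case; yours is more self-contained and makes transparent exactly which metric variations suffice.

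One point in your write-up needs tightening. You assert that a cokernel element $\xi$ is $H$-invariant, but this is not automatic from the setup as stated: since $\nabla f\in TM^H$ along $\gamma$ and $\delta g$ is $H$-invariant, the variation term $g^{-1}(\delta g)(\nabla f,\cdot)$ lands only in $TM^H$, so it cannot kill any component of $\xi$ lying in $(TM^H)^\perp$. What actually saves you is that the \emph{normal} linearized operator on $(TM^H)^\perp$-valued sections is already an isomorphism, so the cokernel has no normal part. This follows from stability at \emph{both} endpoints: the Hessian is positive definite on $(T_pM^H)^\perp=T'_pM$ and on $(T_qM^H)^\perp\subset T'_qM$ (using $H\subseteq\stab(q)$), hence the normal operator has positive-definite asymptotics at $\pm\infty$ and is invertible. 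You should either insert this argument, or---cleaner, and apparently what your phrase ``equivariant sections of $\gamma^*TM$'' intends---set up the universal moduli problem inside $M^H$ from the outset, so that regular values of the projection yield transversality in $M^H$, and then invoke your reduction to conclude transversality in $M$. As written, the opening paragraph (Sard--Smale giving $\mathscr D(p)\pitchfork\mathscr A(q)$ in $M$ directly) and the later restriction to equivariant $\eta$ are in tension; once you commit to one of these two fixes the proof goes through.
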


With these results, we can define the equivariant Morse chain complex with the fixed point set being a sub-complex, and one implication is a Morse theoretical proof of the Smith inequality, which was originally proved in \cite{floyd1952on}: 
\begin{corollary}[Smith inequality]
  Let $p$ be a prime number, and let $M$ be a closed manifold with a finite $p$-group $G$ acting smoothly. Then the following hold:
  \begin{enumerate}
  \item[(i)] For all $\ell \geq 0$, we have
  \[
      \sum_{k=\ell}^{\infty} \dim H_k(M^G; \mathbb{F}_p) \leq \sum_{k=\ell}^{\infty} \dim H_k(M; \mathbb{F}_p).
  \]
  \item[(ii)] Let $\chi_p$ be the mod $p$ Euler characteristic. Then $\chi_p(M^G) \equiv \chi_p(M)$.
  \end{enumerate}
\end{corollary}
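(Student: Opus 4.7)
The plan is to use the stable Morse machinery to build a $G$-equivariant Morse chain complex in which $C_*(M^G;\mathbb{F}_p)$ appears as an explicit subcomplex, and then deduce both statements via $p$-group arguments and standard Smith chain-level manipulations. Concretely, I would apply Theorem~\ref{thm:stableperturbation} to pick a stable equivariant Morse function $f$, and Theorem~\ref{thm: morse smale is generic fixing a stable f} to pick an equivariant Morse--Smale metric $g$. Stability forces the descending manifold of any critical point $p$ to lie in $M^{\stab(p)}$, so for every subgroup $H\leq G$ the span of critical points lying in $M^H$ is closed under the Morse differential, giving a subcomplex that computes $H_*(M^H;\mathbb{F}_p)$; moreover, the Morse index of a critical point $p\in M^H$ is the same whether computed in $M$ or in $M^H$. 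The full Morse chain complex $C_*=C_*(M;\mathbb{F}_p)$ is then a bounded complex of finite-dimensional $\mathbb{F}_p[G]$-modules.

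For (ii), any critical point $p\notin M^G$ has $\stab(p)\subsetneq G$, so its $G$-orbit has cardinality $|G|/|\stab(p)|$, which is a positive power of $p$ since $G$ is a $p$-group. Writing $c_k$ and $c_k^G$ for the number of index-$k$ critical points in $M$ and $M^G$ respectively, grouping the non-fixed critical points into orbits gives $c_k\equiv c_k^G\pmod p$. Applying Euler--Poincar\'e to $C_*(M;\mathbb{F}_p)$ and $C_*(M^G;\mathbb{F}_p)$ yields $\chi_p(M)=\sum_k(-1)^k c_k\equiv\sum_k(-1)^k c_k^G=\chi_p(M^G)\pmod p$.

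For (i), I would induct on $|G|$. When $|G|>1$, the center $Z(G)$ is nontrivial and contains a subgroup $H$ of order $p$, which is automatically normal in $G$. The quotient $G/H$ acts on $M^H$ with fixed set $(M^H)^{G/H}=M^G$; by the inductive hypothesis applied to this smaller-order group action, $\sum_{k\geq\ell}\dim H_k(M^G;\mathbb{F}_p)\leq\sum_{k\geq\ell}\dim H_k(M^H;\mathbb{F}_p)$, reducing to the base case $G\cong\mathbb{Z}/p$. In the base case, each critical point outside $M^G$ has trivial stabilizer, so the quotient complex $N_*=C_*/C_*(M^G;\mathbb{F}_p)$ consists in each degree of a direct sum of free $\mathbb{F}_p[G]$-modules. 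I would then combine the long exact sequence of
\[
0\to C_*(M^G;\mathbb{F}_p)\to C_*\to N_*\to 0
\]
with the two Smith short exact sequences
\[
0\to\nu N_*\to N_*\xrightarrow{\tau}\tau N_*\to 0,\qquad 0\to\tau N_*\to N_*\xrightarrow{\nu}\nu N_*\to 0,
\]
which exist because $\ker\tau=\operatorname{im}\nu$ and $\ker\nu=\operatorname{im}\tau$ on free $\mathbb{F}_p[\mathbb{Z}/p]$-modules (with $\tau=1-\sigma$ and $\nu=\sum_{i=0}^{p-1}\sigma^i$), to extract the tail Smith inequality.

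The Morse-theoretic reductions—stability giving the subcomplex, the central-subgroup induction, and the Euler characteristic identity—are routine consequences of the theorems established earlier. The hard part will be the final algebraic step in (i): the pair long exact sequence alone yields only a weaker bound, and one must carefully combine the two Smith special short exact sequences and track dimensions degree by degree to obtain the \emph{tail} inequality rather than merely a total Betti-number estimate.
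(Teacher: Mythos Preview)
The paper does not actually give its own proof of this corollary. In the introduction it is stated as a consequence of the stable Morse machinery, and in the final section the paper only remarks that ``calculations in Bredon homology are enough to deduce the Smith inequalities'' and cites May and Putman for the argument. So your proposal is considerably more detailed than anything in the paper itself.

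That said, the underlying framework is the same: both you and the paper use Theorems~\ref{thm:stableperturbation} and~\ref{thm: morse smale is generic fixing a stable f} to produce a $G$-CW/Morse chain complex in which the fixed-point complex sits as a genuine subcomplex with matching indices. Where you diverge is in the algebra: the paper's cited route (May) runs through Bredon cohomology with carefully chosen coefficient systems, whereas you take the classical Floyd--Smith chain-level approach via the $\tau,\nu$ special sequences. Your argument for (ii) by orbit counting is correct and self-contained. For (i), your induction via a central order-$p$ subgroup is standard and correct; one small suggestion is that in the base case it is cleaner to work with the Smith sequences on $C_*$ directly rather than on the quotient $N_*$. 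Since $\tau C_*=\tau N_*$ and $\nu C_*=\nu N_*$, your pair sequence and your two sequences on $N_*$ reassemble into the single classical sequence
\[
0\;\longrightarrow\;\bar\rho C_*\oplus C_*(M^G;\mathbb{F}_p)\;\longrightarrow\;C_*\;\xrightarrow{\ \rho\ }\;\rho C_*\;\longrightarrow\;0
\qquad(\rho\in\{\tau,\nu\},\ \bar\rho\ \text{the other}),
\]
and the tail inequality then drops out of the two associated long exact sequences by the usual alternating-sum trick, without having to juggle three long exact sequences simultaneously.
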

Starting with an arbitrary equivariant Morse function, we perturb it to a stable one. Then, by Theorem~\ref{thm: morse smale is generic fixing a stable f}, we can construct the equivariant Thom-Smale-Witten complex. For computational purpose and for adapting the theory to Floer homology, we do not want to perturb the Morse function.
By reducing the perturbation to zero, we recover the Thom-Smale-Witten complex using the unperturbed moduli spaces. But instead of counting smooth Morse trajectories, one also needs to include certain broken trajectories. This process is explained in detail in \cite{bao2024equivariantMorseHomologyFor}.

\section{Morse polynomials}

Suppose that $V$ is a real inner product space of dimension $n$. We write $P(V)$ for the polynomial algebra on $V$, and $P_d(V)$ for the subspace of polynomials of degree at most $d$.

For any point $p \in V$, evaluation at $p$ determines a quotient map $P(V) \to \R$ whose kernel is the associated maximal ideal $m_p$. More generally, for any $p$ and any $k \geq 0$, there is a \emph{Taylor expansion} map
\[
    P(V) \to P(V) / m_p^k \cong P_k(V)
\]
sending a function $f$ to its degree-$k$ Taylor expansion about $p$.

\begin{proposition}
    Suppose that $V$ is a real vector space of dimension $n$, that $p_1,\dots,p_d$ are distinct points in $V$, and that $k \geq 0$. Then there exists an $N$ such that the map
    \[
    P_N(V) \to \prod_i P(V) / m_{p_i}^k
    \]
    is onto. Moreover, $N$ depends only on $d$ and $k$.
\end{proposition}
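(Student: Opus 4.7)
I would prove this via an effective Chinese Remainder Theorem: exhibit polynomials $e_1,\dots,e_d \in P(V)$ whose degrees depend only on $d$ and $k$ and which act as orthogonal idempotents modulo the ideals $m_{p_i}^k$, in the sense that $e_i \equiv \delta_{ij} \pmod{m_{p_j}^k}$ for all $i,j$. Given such $e_i$, any target tuple $(f_1,\dots,f_d)$ lifts by choosing Taylor-polynomial representatives $F_i$ of bounded degree (controlled by $k$) and forming $F = \sum_i F_i e_i$: modulo $m_{p_i}^k$, only the $i$-th term survives and reduces to $F_i \equiv f_i$. The degree of $F$ is then bounded by the Taylor degree plus the degree of $e_i$, both functions of $d$ and $k$ alone.

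The $e_i$ would be built in two steps. For each ordered pair $i \neq j$, choose an affine-linear polynomial $h_{ij} \in P_1(V)$ with $h_{ij}(p_i) = 1$ and $h_{ij}(p_j) = 0$; such $h_{ij}$ exists because $p_i \neq p_j$ (use any linear functional that does not vanish on $p_i - p_j$, suitably normalized). Set
\[
  a_i = \prod_{j \neq i} h_{ij}^{k},
\]
a polynomial of degree $(d-1)k$ satisfying $a_i(p_i) = 1$ and $a_i \in m_{p_j}^k$ for $j \neq i$. The remaining obstacle is that $a_i$ is congruent to $1$ only modulo $m_{p_i}$ itself, not modulo the higher power $m_{p_i}^k$, so it does not yet serve as an idempotent. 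I would correct this by the Hensel-style formula
\[
  e_i = 1 - (1 - a_i)^{k},
\]
of degree at most $(d-1)k^{2}$. Indeed, $1 - a_i \in m_{p_i}$ forces $(1-a_i)^{k} \in m_{p_i}^{k}$, giving $e_i \equiv 1 \pmod{m_{p_i}^{k}}$; and expanding the binomial, every nonconstant term of $(1-a_i)^{k}$ is a multiple of $a_i$, so $e_i \in (a_i) \subset m_{p_j}^{k}$ for each $j \neq i$.

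Putting the pieces together, $N = (k-1) + (d-1)k^{2}$ (or the analogous value, with $k$ replaced by $k+1$, depending on the indexing convention used for $m_p^k$ in the paper) suffices. The only step requiring any thought is the Hensel correction: a naive Lagrange-type choice $e_i = a_i$ already fails for $k \ge 2$, because $P(V)/m_{p_i}^{k}$ carries nontrivial nilpotents and $a_i$ is only a unit there, not equal to $1$ modulo $m_{p_i}^{k}$. The binomial identity $1 - (1-a_i)^{k}$ provides the cheapest polynomial inverse whose total degree remains bounded in terms of $d$ and $k$ alone, independent of $\dim V$ and of the particular configuration of the $p_i$; this is the entire content of the ``moreover'' clause.
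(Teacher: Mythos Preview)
Your proof is correct and follows essentially the same approach as the paper: build Lagrange-type separating polynomials and then apply the Hensel correction $1-(1-\cdot)^k$ to upgrade from congruence modulo $m_{p_i}$ to congruence modulo $m_{p_i}^k$. The only cosmetic difference is that the paper uses the squared-distance polynomials $\phi_j(x)=\prod_{i\neq j}\|x-p_i\|^2/\|p_j-p_i\|^2$ (degree $2d-2$, assuming an inner product) in place of your products of affine-linear $h_{ij}$, so your degree bound comes out roughly half of theirs.
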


The reader will probably recognize this result and its proof as the Chinese remainder theorem, except we need to track an effective bound on $N$.

\begin{proof}
Let
\[
\phi_j(x) = \prod_{i \neq j} \frac{||x-p_i||^2}{ ||p_j-p_i||^2}.
\]
By construction, $\phi_j$ is a polynomial of degree $2d-2$ such that $\phi_j(p_j)=1$ and $\phi_j(p_i) = 0$ for $j \neq i$; equivalently, $\phi_j \equiv 1 \mod m_j$ and $\phi_j \equiv 0 \mod m_i$ for $i \neq j$.

The polynomial $1 - (1 - \phi_j^k)^k$ of degree $(2d-2)k^2$ is then congruent to $1$ mod $m_j^k$ and $0$ mod $m_i^k$ for $j \neq i$. As a result, for any desired Taylor polynomials $f_1, \dots, f_d$ of degree less than $k$, the polynomial
\[
f = \sum_j f_j \cdot (1 - (1 - \phi_j^k)^k)
\]
is a polynomial of degree less than $(2dk-2k-1)k$ with the desired Taylor expansions.
\end{proof}

\begin{corollary}
\label{cor:equivariantlifting}
    Suppose that $V$ has an action of a group $G$ and that $k \geq 0$. Then there exists an $N$ such that for all $p \in V$ with stabilizer group $H \subset G$, the map
    \[
    P_N(V)^G \to \left(P(V)/m_p^k\right)^H,
    \]
    given by Taylor expansion at $p$, is onto. In particular, a Taylor expansion at $p$ lifts to an equivariant polynomial (of degree at most $N$) if and only if it is $H$-fixed.
\end{corollary}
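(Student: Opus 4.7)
The plan is to reduce to the preceding proposition by treating the whole orbit of $p$ at once and then symmetrizing.

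First I would set up the orbit data. Let $G \cdot p = \{p_1, \dots, p_d\}$ with $p_1 = p$ and $d = [G:H]$, and fix coset representatives $\sigma_1 = e, \sigma_2, \dots, \sigma_d$ with $\sigma_i p = p_i$. Given $\bar f \in (P(V)/m_p^k)^H$, I would define $\bar f_i \in P(V)/m_{p_i}^k$ by the rule $\bar f_i(\sigma_i v) = \bar f(v)$, i.e., by transporting $\bar f$ to $p_i$ via $\sigma_i$. This is well-defined: if $\sigma_i' = \sigma_i h$ is another representative of the same coset, then $\bar f_i(\sigma_i' v) = \bar f(h v) = \bar f(v)$ by $H$-invariance of $\bar f$.

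Next I would apply the preceding proposition to the data $(\bar f_1, \dots, \bar f_d)$: it produces an $N$, depending only on $d$ and $k$, and a polynomial $f_0 \in P_N(V)$ whose Taylor expansion at $p_i$ equals $\bar f_i$ for every $i$. Since $d = [G:H] \leq |G|$ and the bound is monotonic in $d$, the same $N$ (depending only on $|G|$ and $k$) works uniformly in $p$. Finally, I would symmetrize:
\[
f = \frac{1}{|G|} \sum_{\sigma \in G} f_0 \circ \sigma \in P_N(V)^G,
\]
which is $G$-invariant and of degree at most $N$. To verify that the Taylor expansion of $f$ at $p$ equals $\bar f$, I would check that every summand already contributes $\bar f$. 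Writing $\sigma = \sigma_i h$ with $h \in H$, and using linearity of the $G$-action on $V$, the Taylor expansion of $f_0 \circ \sigma$ at $p$ equals
\[
(\text{Taylor of } f_0 \text{ at } \sigma p) \circ \sigma = \bar f_i \circ \sigma_i \circ h = \bar f \circ h = \bar f,
\]
the last equality by $H$-invariance of $\bar f$. Averaging then gives the claimed equality. The converse --- that the Taylor expansion of any $G$-invariant polynomial at $p$ is automatically $H$-fixed --- is immediate from $f \circ h = f$ together with $h p = p$.

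The only delicate point is bookkeeping: keeping the direction of the $G$-action on polynomials consistent and confirming the well-definedness of $\bar f_i$ at the first step. Once these conventions are pinned down, the proof is essentially a one-line symmetrization argument on top of the Chinese remainder statement already proved.
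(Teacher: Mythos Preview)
Your proof is correct and follows essentially the same approach as the paper: both reduce to the preceding Chinese remainder proposition applied to the full $G$-orbit of $p$, and both pass from surjectivity of $P_N(V) \to \prod_{[g]} P(V)/m_{gp}^k$ to surjectivity on $G$-fixed points by averaging. The only difference is packaging: the paper phrases the averaging step abstractly as ``taking fixed points is exact in characteristic zero'' and identifies $\bigl(\prod_{[g]} P(V)/m_{gp}^k\bigr)^G \cong (P(V)/m_p^k)^H$ in one line, whereas you unwind both of these explicitly by hand-building the transported tuple $(\bar f_1,\dots,\bar f_d)$ and verifying the Taylor expansion of each summand $f_0\circ\sigma$ directly.
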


\begin{proof}
   For any $p$ in $V$ with stabilizer $H$, the orbit $G \cdot p$ is in bijection with the set $G/H$ and hence has size smaller than $|G|$. The Taylor expansion map
   \[
   P_N(V) \to \prod_{[g] \in G/H} P(V)/m_{g\cdot p}^k
   \]
   is a $G$-equivariant surjection for some $N$ depending only on $|G|$ and $k$. Taking $G$-fixed points, we get a surjection
   \[
   P_N(V)^G \to \left(\prod_{[g] \in G/H} P(V)/m_{gp}^k\right)^G \cong \left(P(V)/m_{p}^k\right)^H
   \]
   because taking fixed points is exact in characteristic zero.
\end{proof}

\section{Local Morse functions}

\begin{proposition}
\label{prop:localmorsegeneric}
    Suppose $V$ is a $G$-inner product space of dimension $n$. For sufficiently large $N$, Morse functions are generic in $P_N(V)^G$ .
\end{proposition}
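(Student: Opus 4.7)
The plan is to combine the surjectivity guaranteed by Corollary~\ref{cor:equivariantlifting} with a parametric transversality (Sard--Smale) argument carried out orbit-type stratum by orbit-type stratum. First I would fix $N$ large enough that Corollary~\ref{cor:equivariantlifting} holds with $k=3$, so that for every $p \in V$ with stabilizer $H = \stab(p)$ the 2-jet map $P_N(V)^G \to (P(V)/m_p^3)^H$ is surjective. Since $G$ is finite there are only finitely many conjugacy classes of subgroups, and it suffices to prove, for each such class $(H)$, that Morseness at critical points with stabilizer exactly $H$ holds for a dense set of $f$.

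Next I would fix a subgroup $H \subset G$ and decompose $V = V^H \oplus N$ into the $H$-fixed subspace and the kernel of the $H$-averaging operator. For any $f \in P_N(V)^G$ and any $p \in V^H$, equivariance forces $df_p$ to vanish on $N$ (since $(N^*)^H = 0$), and equivariance of the Hessian together with the identity $\operatorname{Sym}^2(V^*)^H = \operatorname{Sym}^2((V^H)^*) \oplus \operatorname{Sym}^2(N^*)^H$ splits $\hess{f}(p)$ as a direct sum of a form on $V^H$ and an $H$-invariant form on $N$. In particular, $p$ is critical for $f$ iff it is critical for $f|_{V^H}$, and the Hessian is nondegenerate iff both summands are.

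With this normal form in hand, consider the ``universal 2-jet'' map
\[
\Phi \co P_N(V)^G \times V^{(H)} \longrightarrow (V^H)^* \oplus \operatorname{Sym}^2((V^H)^*) \oplus \operatorname{Sym}^2(N^*)^H,
\]
$(f, p) \mapsto (df_p|_{V^H},\, \hess{f}(p)|_{V^H},\, \hess{f}(p)|_N)$, where $V^{(H)} \subset V^H$ is the open stratum of points with stabilizer exactly $H$. By Corollary~\ref{cor:equivariantlifting}, for each fixed $p$ the map $\Phi(\,\cdot\,, p)$ is already surjective, so $\Phi$ is a submersion. Let $\Sigma$ be the closed subvariety of $\operatorname{Sym}^2((V^H)^*) \oplus \operatorname{Sym}^2(N^*)^H$ consisting of pairs in which at least one factor is degenerate; it is a stratified set of codimension at least $1$. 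Then $\Phi^{-1}(\{0\} \times \Sigma)$ has codimension at least $\dim V^H + 1$ in $P_N(V)^G \times V^{(H)}$, hence dimension strictly less than $\dim P_N(V)^G$, and Sard's theorem applied stratum by stratum shows that its projection to $P_N(V)^G$ has measure zero.

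Consequently, for $f$ in the complement of a finite union of measure-zero sets---one per conjugacy class $(H)$---every critical point of $f$ is nondegenerate, establishing density. The main obstacle is verifying that $\Phi$ is a submersion in the equivariant setting, which is precisely what Corollary~\ref{cor:equivariantlifting} supplies by ensuring that every $H$-invariant 2-jet at $p$ lifts to a $G$-equivariant polynomial of degree at most $N$; once this is granted, the remainder is a routine Sard argument carried out separately on each of the finitely many orbit types.
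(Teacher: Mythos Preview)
Your proof is correct and follows essentially the same route as the paper's: both fix $N$ via Corollary~\ref{cor:equivariantlifting}, work one orbit-type stratum $U(H)$ at a time, use the corollary to get fiberwise surjectivity of the jet map, and then do a codimension count showing the non-Morse locus projects to a measure-zero subset of $P_N(V)^G$. The only cosmetic differences are that you explicitly split the Hessian into its $V^H$- and $N$-blocks (the paper instead works with the full determinant on $\operatorname{Sym}^2(V)^H$ and invokes $\|x\|^2$ to see it is nonvanishing) and you phrase the argument in the language of submersions and parametric Sard, whereas the paper phrases it as a direct dimension count; neither changes the substance.
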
    

\begin{proof}
    Fix $N$ as in Corollary~\ref{cor:equivariantlifting} with $k=2$, so $stab(p)$-equivariant quadratic Taylor expansions at $p$ all lift to $G$-equivariant of degree at most $N$. Let $d = \dim(P_N)$. A polynomial function $f \in P_N$ is specifically equivalent data to the coefficients in its Taylor expansion at the origin, so $d = \binom{n+N}{n}$.

    Let
    \[
    U(H) = \{p \in V \mid stab(p) = H\},
    \]
    which is a Zariski open set $V^H \setminus \cup_{H \subset K} V^K$. There is a map of real algebraic varieties
    \[
    V^H \times P_N(V)^G \to V^H \times P_2(V)^H
    \]
    given by $(p,f) \mapsto (p,q)$ where $q(x-p)$ is the quadratic Taylor approximation of $f$ at $p$. This restricts to a surjective map
    \[
    T\co U(H) \times P_N(V)^G \to U(H) \times P_2(V)^H
    \]
    by Corollary~\ref{cor:equivariantlifting}.

    The subset
    \[
    N(H) = \{(p,f) \in U(H) \times (P_N)^G | f\text{ is not Morse at }p\}
    \]
    is a closed subset of codimension $(\dim(V^H)+1)$, as follows. If $T(p,f) =(p,q)$ in $U(H) \times P_2(V)^H$, then $f$ is not Morse at $p$ if and only if $q$ is not Morse at the origin; by surjectivity of $T$ it suffices to show that non-Morse functions are of codimension $(\dim(V^H) + 1)$ in $P_2(V)^H$.
    
    Note that $P_2(V)^H = \R \oplus V^H \oplus Sym^2(V)^H$, decomposing a polynomial into its constant, linear, and quadratic terms. The polynomials which are critical at the origin are those with vanishing linear term---the component in $V^H$---and so form a linear subspace of $P_2(V)^H$ of codimension $\dim(V^H)$. Degeneracy of this critical point is equivalent to degeneracy of the Hessian---vanishing of the determinant of the matrix of second partial derivatives---a degree-$n$ polynomial in the coefficients of the quadratic term. However, $||x||^2$ is a degree-2 $H$-equivariant polynomial with a nondegenerate Hessian, so this Hessian determinant is not identically the zero function on $Sym^2(V)^H$. Because the Hessian determinant is not the zero polynomial, the non-Morse functions are of codimension $(\dim(V^H)+1)$.

    Therefore, the dimension of $N(H)$ is
    \[
    \dim(U(H)) + \dim(P_N(V)^G) - \dim(V^H) - 1 = \dim(P_N(V)^G) - 1.
    \]
    The composite projection $N(H) \to P_N(V)^G$ has an image which is therefore of measure zero. As $H$ varies among subgroups of $G$, the union of these images is therefore nowhere dense. However, the union of these images consists precisely of non-Morse functions, as desired.
\end{proof}

\begin{theorem}
    Suppose $V$ is a $G$-inner product space of dimension $n$ and $2 \leq k$. Then for any $G$-equivariant compact subset $K \subset V$, Morse functions are open and dense in $C^k(K)^G$.
\end{theorem}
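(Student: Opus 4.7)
The argument splits into openness and density.

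For \emph{openness}, I would show that on the compact set $K$ the Morse condition is $C^2$-open, hence $C^k$-open. If $f \in C^k(K)^G$ is Morse, its critical points in $K$ form a finite set $\{p_1,\dots,p_m\}$ (nondegenerate critical points are isolated and $K$ is compact). By the Morse lemma I may pick disjoint open neighborhoods $U_i$ of the $p_i$ on which $\hess f$ remains nondegenerate, and on the compact complement $K \setminus \bigcup_i U_i$ the continuous map $|df|$ is bounded below by some constant $c>0$. Any $f'$ that is $C^2$-close enough to $f$ then satisfies $|df'| \ge c/2$ outside $\bigcup U_i$, and has nondegenerate Hessian at each of its critical points inside $\bigcup U_i$; so $f'$ is Morse. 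Intersecting with the closed subspace $C^k(K)^G \subset C^k(K)$ preserves openness.

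For \emph{density}, fix $f \in C^k(K)^G$ and $\varepsilon>0$. The plan is to first approximate $f$ by an equivariant polynomial of large degree, and then perturb inside that finite-dimensional space using Proposition~\ref{prop:localmorsegeneric}. Let $N_0$ be the bound furnished by that proposition. By the $C^k$-version of the Weierstrass approximation theorem on the compact set $K$, there is a polynomial $p$ of some degree $N \ge N_0$ with $\|f-p\|_{C^k(K)} < \varepsilon/2$. Averaging over $G$, set
\[
g \;=\; \frac{1}{|G|}\sum_{\sigma\in G} p\circ\sigma \;\in\; P_N(V)^G.
\]
Because $f$ is $G$-equivariant, $K$ is $G$-invariant, and each $\sigma\in G$ acts as a linear isometry of $V$, one has $f - g = \frac{1}{|G|}\sum_\sigma (f-p)\circ\sigma$, whence $\|f-g\|_{C^k(K)} \le \|f-p\|_{C^k(K)} < \varepsilon/2$. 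Now $P_N(V)^G$ is finite-dimensional, so all norms on it are equivalent; Proposition~\ref{prop:localmorsegeneric} gives that Morse polynomials are generic, in particular dense, in $P_N(V)^G$. Hence there is a Morse $h \in P_N(V)^G$ with $\|h-g\|_{C^k(K)} < \varepsilon/2$, giving $\|h-f\|_{C^k(K)} < \varepsilon$. Since $h$ is Morse on all of $V$, its restriction to $K$ is Morse.

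\textbf{Expected main obstacle.} The conceptual steps are routine once Proposition~\ref{prop:localmorsegeneric} is in hand; the only point that needs slight care is the interaction between the $C^k$ (rather than $C^0$) polynomial approximation and the $G$-averaging step. This is resolved by the observation above that averaging a $C^k$-approximation $p$ of the already-equivariant function $f$ cannot increase the $C^k$-error on $K$, because the $G$-action is by isometries of $V$ preserving $K$. Everything else—existence of polynomial approximation in $C^k$, equivalence of norms on the finite-dimensional space $P_N(V)^G$, and the deduction of density from the genericity statement—is standard.
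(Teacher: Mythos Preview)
Your proposal is correct and follows essentially the same route as the paper: cite or sketch openness (the paper simply invokes Milnor's Lemma~B), then for density approximate $f$ by a polynomial, average over $G$ to land in $P_N(V)^G$, and apply Proposition~\ref{prop:localmorsegeneric} to perturb to a Morse polynomial. Your write-up supplies more detail than the paper does, in particular the careful check that $G$-averaging does not worsen the $C^k$-error because $G$ acts by isometries preserving $K$; this is exactly the ``averaging trick'' the paper invokes without elaboration.
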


\begin{proof}
    Openness is already shown in \cite[Lemma B]{milnor1965lectures}, and so it suffices to prove density.
    
    Let $f \in C^k(K)^G$. The polynomial functions $P(V)$ are dense in $C^k(K)$, and by the averaging trick we find that equivariant polynomials $P(V)^G$ are dense in $C^k(K)^G$. Proposition~\ref{prop:localmorsegeneric} then shows that equivariant Morse polynomials are dense in $C^k(K)^G$. Putting these together, equivariant Morse functions are dense in $C^k(K)^G$.
\end{proof}

\section{Equivariant tubular neighborhoods}
\begin{lemma}
    Any smooth $G$-equivariant manifold $M$ has a smooth $G$-equivariant metric.
\end{lemma}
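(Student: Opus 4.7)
The plan is to combine two standard ingredients: the existence of a smooth Riemannian metric on $M$ without any equivariance requirement, and then the averaging trick over the finite group $G$.

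First, I would choose an arbitrary smooth Riemannian metric $g_0$ on $M$; such a metric exists by the standard partition-of-unity construction applied to a locally finite cover of $M$ by coordinate charts, pulling back the Euclidean inner product in each chart and gluing via a subordinate partition of unity. No equivariance is required at this stage.

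Next, I would average $g_0$ over the action of $G$ to obtain an equivariant metric. Explicitly, for each $p \in M$ and $v, w \in T_p M$, define
\[
g_p(v,w) = \frac{1}{|G|} \sum_{\sigma \in G} g_0\bigl(d\sigma_p(v),\, d\sigma_p(w)\bigr).
\]
This is smooth, being a finite sum of smooth tensor fields (each $\sigma$ is a diffeomorphism, so the pullback $\sigma^* g_0$ is a smooth symmetric $2$-tensor). It is symmetric and bilinear termwise. It is positive definite because each summand is nonnegative and the $\sigma = e$ term is strictly positive on nonzero tangent vectors, so a convex combination is strictly positive. Equivariance follows from reindexing: for $\tau \in G$,
\[
g_{\tau p}(d\tau_p v,\, d\tau_p w) = \frac{1}{|G|} \sum_{\sigma \in G} g_0\bigl(d(\sigma\tau)_p v,\, d(\sigma\tau)_p w\bigr) = g_p(v,w),
\]
after substituting $\sigma' = \sigma\tau$ and using that left multiplication by $\tau$ permutes $G$.

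There is essentially no main obstacle here; this is a routine averaging argument. The only prerequisite is that $|G|$ is invertible in $\mathbb{R}$, which holds because $G$ is finite. This is precisely the same mechanism (exactness of the fixed-point functor in characteristic zero) invoked in Corollary~\ref{cor:equivariantlifting}, and for infinite compact Lie groups one would instead integrate against Haar measure, but for the finite setting of the present paper no further machinery is needed.
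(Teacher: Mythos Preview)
Your proof is correct and follows exactly the same averaging approach as the paper's one-line proof: pick any smooth metric and average over $G$. You have simply spelled out more of the routine verifications (positive definiteness, equivariance via reindexing) that the paper leaves implicit.
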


\begin{proof}
    If we choose any smooth metric $g$ on $M$, then $\tfrac{1}{|G|} \sum_{\sigma \in G} g^\sigma$ is a $G$-equivariant metric.
\end{proof}

\begin{proposition}
    Suppose $N \subset M$ is a closed submanifold with normal bundle $\nu$, and $H \subset G$ is a subgroup preserving $N$. Then there exists an $H$-equivariant tubular neighborhood of $N$: an open neighborhood $N \subset U \subset M$ preserved by $H$ and an $H$-equivariant diffeomorphism $\nu \to U$ which is the identity on the zero-section. 
\end{proposition}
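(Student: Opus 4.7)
The plan is to build the tubular neighborhood using the exponential map of an $H$-equivariant metric on $M$, then compose with a fiberwise rescaling to spread the whole of $\nu$ onto it.

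First, I apply the preceding lemma with the subgroup $H$ in place of $G$ (the averaging proof works verbatim for any finite group) to obtain a smooth $H$-equivariant Riemannian metric $g$ on $M$. Using $g$, I identify $\nu$ with the orthogonal complement of $TN$ inside $TM|_N$; this identification is $H$-equivariant because $N$ is $H$-stable and $g$ is $H$-invariant. The Riemannian exponential map $\exp_g\colon TM \to M$ commutes with any isometry, hence with the action of $H$, so its restriction $\Phi = \exp_g|_\nu\colon \nu \to M$ is a smooth $H$-equivariant map that restricts to the identity on the zero section.

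Second, the non-equivariant tubular neighborhood theorem guarantees that $\Phi$ is a diffeomorphism on some open neighborhood of the zero section. Because $\Phi$ is $H$-equivariant, the subset of $\nu$ on which $\Phi$ is simultaneously a local diffeomorphism and injective is $H$-invariant, so the fiberwise injectivity radius $r\colon N \to (0,\infty]$ is $H$-invariant and lower semi-continuous. A partition-of-unity argument on the orbit space $N/H$ then produces a smooth, strictly positive, $H$-invariant function $\epsilon\colon N \to \R_{>0}$ with $\epsilon < r$; the open disc bundle $\nu_\epsilon = \{v \in \nu : \|v\| < \epsilon(\pi(v))\}$ is $H$-invariant, and $\Phi|_{\nu_\epsilon}$ is an $H$-equivariant diffeomorphism onto an open $H$-invariant neighborhood $U$ of $N$.

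Finally, composing with the $H$-equivariant rescaling diffeomorphism $\nu \to \nu_\epsilon$ given by $v \mapsto \epsilon(\pi(v))\, v / \sqrt{1 + \|v\|^2}$ (equivariant because $\epsilon$ and $\|\cdot\|$ are $H$-invariant) yields the required $H$-equivariant diffeomorphism $\nu \to U$ extending the identity on the zero section. The only step with real content beyond formal equivariance is the production of the smooth $H$-invariant radius function $\epsilon$; when $M$ is compact, as throughout the paper, this is immediate by taking $\epsilon$ a small $H$-invariant constant, while in general one builds $\epsilon$ from a locally finite $H$-invariant cover via a partition of unity on the Hausdorff quotient $N/H$.
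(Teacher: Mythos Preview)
Your proof is correct and follows essentially the same approach as the paper: use the exponential map of an equivariant metric to obtain an $H$-equivariant diffeomorphism from a disc bundle in $\nu$ onto a tubular neighborhood, then compose with an equivariant fiberwise rescaling to extend to all of $\nu$. The only differences are cosmetic---the paper takes a constant radius $\epsilon$ (valid since $N$ is compact here) and the rescaling $w \mapsto \epsilon w/(\epsilon^2 - \|w\|^2)$, whereas you allow a variable $\epsilon$ and use $v \mapsto \epsilon(\pi(v))\,v/\sqrt{1+\|v\|^2}$.
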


\begin{proof}
    Choose a $G$-equivariant metric on $M$. Then geodesic flow defines an exponential map $\exp\co V \to M$ for some open neighborhood $N \subset V \subset \nu$, and $d\exp_(p,0)$ is the canonical isomorphism for any $p \in N$. By the inverse function theorem and compactness of $N$, there exists an $\epsilon > 0$ such that $\exp$ induces a diffeomorphism $\{\vec w \in \nu \mid ||\vec w|| < \epsilon\} \to U$, sending for some open neighborhood of $N$.

    Moreover, for any $\sigma \in H$, $\exp \circ \sigma = \sigma \circ \exp$, because both sides are defined by geodesic flows; hence $U$ is preserved by $H$ and $\exp$ is $H$ equivariant.
        
    The function
    \[
    \vec w \mapsto \frac{\epsilon \vec w }{\epsilon^2 - ||\vec w||^2}
    \]
    is an equivariant diffeomorphism $\{\vec w \in \nu \mid ||\vec w|| < \epsilon\} \to \nu$, and precomposing with its inverse gives the desired diffeomorphism $\nu \to U$.
\end{proof}

\begin{corollary}
\label{cor:tangentchart}
    For any $p \in M$ with stabilizer $\stab(p) = H$, there exists an open neighborhood near $p$ which is $H$-equivariantly diffeomorphic to the tangent space $T_p M$ with its induced $H$-action.
\end{corollary}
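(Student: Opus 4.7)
The plan is to apply the preceding proposition on $H$-equivariant tubular neighborhoods to the $0$-dimensional closed submanifold $N = \{p\} \subset M$.

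First I would check that this choice of $N$ satisfies the hypotheses of the proposition. The singleton $\{p\}$ is a closed embedded submanifold of $M$, and because $\stab(p) = H$, the subgroup $H \subset G$ preserves $N$. The normal bundle $\nu$ of the point $p$ is, by definition, the quotient $T_p M / T_p N = T_p M / 0 = T_p M$, equipped with its induced linear $H$-action coming from the differentials $d\sigma_p$ for $\sigma \in H$.

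Next I would invoke the proposition, which produces an open $H$-invariant neighborhood $U$ of $\{p\}$ in $M$ together with an $H$-equivariant diffeomorphism $\nu \to U$ sending the zero section to $p$. Unwinding the identification $\nu = T_p M$, this is precisely an $H$-equivariant diffeomorphism from $T_p M$ (with its linear $H$-action) onto an open neighborhood of $p$ in $M$, which is the required chart.

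There is no substantive obstacle here: the entire content is packaged into the previous proposition, and the only thing to verify is that a single point is a legitimate instance of the closed submanifold hypothesis with normal bundle equal to $T_p M$. One small verification worth mentioning is that the construction in the proposition (geodesic exponential with respect to a $G$-equivariant metric, followed by the radial rescaling) is automatically $H$-equivariant on $\nu = T_p M$ because geodesic flow from $p$ commutes with the linear $H$-action on $T_p M$, so no additional argument is needed.
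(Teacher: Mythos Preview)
Your proposal is correct and is precisely the intended argument: the paper states this as an immediate corollary of the tubular neighborhood proposition, and applying it with $N=\{p\}$ (so that $\nu = T_pM$ with its induced $H$-action) is exactly the specialization in mind.
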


\section{Global Morse functions}

Now that we know that equivariant Morse functions are locally generic, we can proceed as Milnor did to show that the same is true globally.

\begin{proposition}
    For a smooth closed $G$-manifold $M$  and $k \geq 2$, Morse functions are generic on $C^\infty(M)^G$ in the $C^k$-topology.
\end{proposition}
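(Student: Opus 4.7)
My plan is to adapt Milnor's strategy of extending local genericity to global genericity via a finite equivariant cover and inductive perturbation, taking care to preserve $G$-equivariance at every step. Openness of the Morse condition is standard: since $M$ is compact, being Morse is determined by nondegeneracy at finitely many critical points, a condition preserved under sufficiently small $C^k$-perturbations for $k \geq 2$. So only density requires work.

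To establish density, I would first use Corollary~\ref{cor:tangentchart} together with the equivariant tubular neighborhood result to cover $M$ by finitely many $G$-invariant open sets $U_1, \ldots, U_m$, each $G$-equivariantly diffeomorphic to $G \times_{H_i} V_i$ for $H_i = \stab(p_i)$ and $V_i$ a bounded open $H_i$-invariant neighborhood of the origin in $T_{p_i} M$. I would fix a slightly shrunken $G$-invariant cover $W_i$ with $\bar{W_i} \subset U_i$ that still covers $M$. Given $f \in C^\infty(M)^G$ and $\epsilon > 0$, I would inductively construct equivariant smooth functions $f = f_0, f_1, \ldots, f_m$ with $\|f_i - f_{i-1}\|_{C^k} < \epsilon/m$ such that each $f_i$ is Morse on a neighborhood of $\bar{W_1} \cup \cdots \cup \bar{W_i}$.

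For the inductive step, the identification $U_i \cong G \times_{H_i} V_i$ translates $G$-equivariant functions on $U_i$ bijectively into $H_i$-equivariant functions on the slice $V_i$. Applying the previous density theorem for equivariant functions on compact subsets of a $G$-inner product space (with $G$ replaced by $H_i$ and $V$ replaced by $T_{p_i} M$) to the $H_i$-equivariant compact slice $K_i \subset V_i$ corresponding to $\bar{W_i}$, I obtain a small $H_i$-equivariant perturbation on $V_i$ making the restricted function Morse on $K_i$. This yields a $G$-equivariant perturbation $\tilde h$ on $U_i$. Multiplying by a $G$-invariant smooth cutoff $\chi$ supported in $U_i$ with $\chi \equiv 1$ on $\bar{W_i}$ (constructed by averaging any non-equivariant cutoff over $G$), I set $f_i = f_{i-1} + \chi \tilde h$, which is $G$-equivariant and Morse on $\bar{W_i}$, since $\chi \equiv 1$ there.

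The main obstacle is ensuring that the perturbation at step $i$ does not destroy the Morse property already established on $K := \bar{W_1} \cup \cdots \cup \bar{W_{i-1}}$. This is handled by the openness of the Morse condition on the compact set $K$: by choosing $\tilde h$, and hence $\chi \tilde h$, sufficiently small in the $C^k$-norm, the nondegeneracy of the finitely many critical points of $f_{i-1}$ in $K$ persists for $f_i$, and no new critical points can appear in $K$ under a small enough $C^1$-perturbation. This closes the induction and produces $f_m$ that is Morse on all of $M$ and within $\epsilon$ of $f$ in the $C^k$-topology.
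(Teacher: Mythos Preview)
Your proposal is correct and follows essentially the same strategy as the paper: both reduce to the local density result on an $H_i$-slice via Corollary~\ref{cor:tangentchart}, extend the resulting $H_i$-equivariant perturbation to a $G$-equivariant one on the orbit $G\times_{H_i}V_i$, patch with a cutoff, and iterate over a finite cover using openness of the Morse condition on compacta. The only cosmetic difference is that the paper phrases the chart as an $H_i$-invariant $U_i$ with disjoint $G$-translates and builds the $G$-extension by hand, whereas you package the same data as a $G$-invariant $U_i\cong G\times_{H_i}V_i$ and invoke the slice correspondence; the content is identical.
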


\begin{proof}
    Each $p \in M$ with stabilizer $H$ has an $H$-equivariant open neighborhood $U$ diffeomorphic to open discs in $V$ for some $H$-inner product space $V$ of $H$, by Corollary~\ref{cor:tangentchart}. We may choose $V$ sufficiently small that $V \cap {}^\sigma V = \emptyset$ for $\sigma \not\in H$.
    
    We proceed similarly to the proof of \cite[Theorem 2.7]{milnor1965lectures}. Find a finite cover $U_i$ of such $H_i$-equivariant coordinate charts with closed discs $D_i$ around the origin that cover $M$.

    Recall from \cite[Lemma B]{milnor1965lectures} that, for a compact subset $K \subset M$, being Morse on $K$ is an open condition in the $C^k$-topology. Suppose $f$ is in $C^\infty(M)^G$. By induction, assume that we can perturb $f$ an arbitrarily small amount to be Morse on $D_1 \cup \dots \cup D_{i-1}$. Any sufficiently small perturbations of $f$ in the $C^k$-topology also remain Morse on $D_1 \cup \dots \cup D_{i-1}$, and so it suffices to show that there is always a small perturbation of $f$ which is Morse on $D_i$.
    
    Using a cutoff function, we can then approximate $f$ arbitrarily closely on $U_i$ by an $H_i$-equivariant smooth function $h$ which is Morse on $D_i$ and which agrees with $f$ near the boundary of $U_i$. Since the orbits ${}^\sigma U_i$ do not intersect for distinct $[\sigma] \in G/H_i$, setting $h(\sigma x) = h(x)$ gives a well-defined extension to a $G$-equivariant smooth function on $W_i = \cup_{\sigma \in G/H_i} {}^\sigma U_i$, Morse on $D_i$, which agrees with $f$ near the boundary of $W_i$. We can then extend this smooth function to $M$ by setting it equal to $f$ outside $W_i$, obtaining an arbitrarily small perturbation of $f$ which is $G$-equivariant and Morse on $D_i$.
\end{proof}

\section{Stable Morse functions}

Our goal in this section is to prove Theorem~\ref{thm:stableperturbation}:  any equivariant Morse function on $M$ with a critical point $p$ can be altered in an arbitrarily small neighborhood $U$ of $p$ to create a new Morse function, having only stable critical points in $U$.

\begin{remark}
    The altered Morse function will still have a critical point at $p$ of a different index. As a result, it involves altering the signature of the Hessian and cannot be done with a small perturbation of the original function in the $C^2$-topology.
\end{remark}

We recall the following equivariant Morse lemma from \cite[Lemma 4.1]{wasserman1969equivariant}.

\begin{proposition}
    Suppose the critical point $p$ has $\stab(p) = H$. Then there exists an $H$-equivariant coordinate neighborhood of $p$ diffeomorphic to an open disc in $V \oplus W$ for some $H$-inner product spaces $V$ and $W$, such that in these coordinates the Morse function is given by $f(v,w) = ||v||^2 - ||w||^2$.
\end{proposition}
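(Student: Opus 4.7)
The plan is to reduce to a local model on $T_p M$ via the equivariant tangent chart, and then conjugate the function to its quadratic part by a Moser-type argument carried out $H$-equivariantly.

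First, by Corollary~\ref{cor:tangentchart}, replace a neighborhood of $p$ by an $H$-equivariant neighborhood $U$ of $0 \in T_p M$ sending $p \mapsto 0$; the function pulls back to an $H$-invariant smooth $\tilde f$ on $U$ with $d\tilde f_0 = 0$, nondegenerate Hessian $Q = \hess{\tilde f}(0)$, and (after subtracting the constant $\tilde f(0)$) we may assume $\tilde f(0) = 0$. Fix any $H$-invariant inner product on $T_p M$. Since $Q$ is $H$-invariant, nondegenerate, and symmetric, its positive and negative eigenspaces $V$ and $W$ are $H$-invariant subspaces with $T_p M = V \oplus W$; rescaling along eigendirections produces $H$-invariant inner products on $V$ and $W$ in which $Q(v+w, v+w) = 2\|v\|^2 - 2\|w\|^2$. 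Writing $q(v, w) = \|v\|^2 - \|w\|^2$, the difference $\tilde f - q$ has vanishing $0$-, $1$-, and $2$-jet at $0$, and it remains to find an $H$-equivariant diffeomorphism $\psi$ near $0$ with $\psi(0) = 0$ and $\tilde f \circ \psi = q$.

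Next, apply an equivariant Moser trick along the linear path $f_t = (1-t)q + t\tilde f$. Using the integral forms of Hadamard's lemma we have
\[
\partial_k f_t(x) = \sum_j B_{kj}(t, x)\, x_j, \qquad (\tilde f - q)(x) = \sum_{i, j} H_{ij}(x)\, x_i x_j,
\]
where $B_{kj}(t, x) = \int_0^1 \partial_j\partial_k f_t(sx)\, ds$ and $H_{ij}(x) = \int_0^1 (1-s)\, \partial_i\partial_j(\tilde f - q)(sx)\, ds$. The matrix-valued maps $B(t, \cdot)$ and $H$ are smooth and $H$-equivariant because the integrands commute with the $H$-action on $T_p M$; moreover $B(t, 0) = Q$ and $H(0) = 0$. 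Hence $B(t, x)$ is invertible for $x$ near $0$, and the time-dependent vector field $X_t(x) = -B(t, x)^{-1} H(x)\, x$ is smooth and $H$-equivariant on a neighborhood of $0$, with $X_t(0) = 0$ and $df_t(X_t) = -(\tilde f - q)$. Integrating $X_t$ yields an $H$-equivariant flow $\psi_t$ fixing $0$ with $\tfrac{d}{dt}(f_t \circ \psi_t) = (\partial_t f_t + df_t(X_t)) \circ \psi_t = 0$, so $\tilde f \circ \psi_1 = q$ on a neighborhood of $0$. Composing $\psi_1$ with the chart from the first paragraph delivers the required $H$-equivariant coordinates.

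The main obstacle is the Moser step. A naive choice such as $X_t = -\bigl((\tilde f - q)/\|\nabla f_t\|^2\bigr)\, \nabla f_t$ divides an order-$3$ quantity by an order-$2$ quantity and fails to be smooth at $0$. The Hadamard integral representations circumvent this by writing $df_t$ and $\tilde f - q$ as explicit linear and bilinear expressions in $x$ with smooth matrix coefficients, reducing the equation $df_t(X_t) = -(\tilde f - q)$ to inversion of the matrix $B(t, x)$, which is nonsingular on a neighborhood of $0$; equivariance is then automatic because the integrals commute with the $H$-action on $T_p M$.
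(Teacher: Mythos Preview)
The paper does not prove this proposition; it simply cites it as \cite[Lemma~4.1]{wasserman1969equivariant}. Your argument is a correct, self-contained proof via the Moser--Palais method, which is in fact essentially the approach Wasserman himself uses. A couple of minor remarks: the phrase ``rescaling along eigendirections'' is slightly imprecise when eigenvalues are repeated, but your intended construction---defining the new inner products on $V$ and $W$ as $\tfrac12 Q|_V$ and $-\tfrac12 Q|_W$---is $H$-invariant and does what you want. Also, the identity $df_t(X_t) = -(\tilde f - q)$ relies on $B(t,x)$ being symmetric (so that $x^T B^T B^{-1} H x = x^T H x$); this holds because the integrand $\partial_j\partial_k f_t$ is symmetric in $j,k$, but it is worth making explicit. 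Finally, the existence of the time-$1$ flow on a neighborhood of $0$ uses that $X_t(0)=0$ for all $t$ and compactness of $[0,1]$; this is standard but could be stated.
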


We will need a cutoff-type function satisfying certain constraints.
\begin{proposition}
    There exists a smooth function $\phi\co \R \to \R$ satisfying the following properties:
    \begin{itemize}
    \item It is odd: $\phi(-t) = -\phi(t)$.
    \item It satisfies $\phi(t) = 1$ for $t \geq 1$.
    \item It is nondecreasing: $\phi'(t) \geq 0$ for all $t$.
    \item The second derivative $\phi''(t)$ has a unique zero in $(-1,1)$. (In particular, it is positive on $(-1,0)$.)
    \end{itemize}
\end{proposition}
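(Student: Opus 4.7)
The plan is to construct $\phi$ as a shifted antiderivative of a carefully chosen smooth even bump function supported on $[-1,1]$. Since $\phi$ must be odd with $\phi(t)=1$ for $t\geq 1$, its derivative $\phi'$ will be an even, nonnegative function, supported in $[-1,1]$, with $\int_{-1}^{1}\phi' = 2$. The conditions on $\phi''$ then translate into saying that $\phi'$ should have a unique interior critical point at $0$, strictly increasing on $(-1,0)$ and strictly decreasing on $(0,1)$.

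First, I would take the standard bump $\rho(t) = c\, e^{-1/(1-t^2)}$ for $|t|<1$, extended by zero outside, with $c>0$ chosen so that $\int_{-1}^{1}\rho(t)\,dt = 2$. This $\rho$ is smooth on $\R$, nonnegative, and even. A direct computation gives
\[
\rho'(t) = \frac{-2ct}{(1-t^2)^2}\, e^{-1/(1-t^2)} \quad \text{for } |t|<1,
\]
which is strictly positive on $(-1,0)$, vanishes only at $t=0$ inside $(-1,1)$, and is strictly negative on $(0,1)$. Outside $[-1,1]$, of course, $\rho'\equiv 0$.

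Then I would set
\[
\phi(t) \;=\; \int_{-\infty}^{t}\rho(s)\,ds \;-\; 1.
\]
Smoothness is immediate, and $\phi' = \rho \geq 0$ gives monotonicity. For $t\geq 1$ the integral equals $\int_{-1}^{1}\rho = 2$, so $\phi(t)=1$. For oddness, the substitution $u=-s$ combined with the evenness of $\rho$ gives $\phi(-t) + \phi(t) = \int_{-\infty}^{\infty}\rho - 2 = 0$. Finally, $\phi''(t) = \rho'(t)$, so the unique zero of $\phi''$ in $(-1,1)$ lies at $0$, and the positivity on $(-1,0)$ is exactly the computation above.

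There is no real obstacle here; the construction is textbook-level. The only point needing a little care is oddness, which is precisely why the normalization $\int\rho = 2$ and the shift by $-1$ are matched: any other choice would break the symmetry $\phi(-t) = -\phi(t)$.
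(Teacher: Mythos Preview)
Your construction is correct: the standard bump $\rho(t)=c\,e^{-1/(1-t^2)}$ normalized to have total mass $2$, with $\phi(t)=\int_{-\infty}^t\rho-1$, satisfies all four conditions, and your verification of each (oddness via the substitution $u=-s$ and evenness of $\rho$, and the sign analysis of $\rho'$) is clean and complete. The paper itself does not supply a proof of this proposition, treating the existence of such a cutoff as standard, so there is no argument to compare against; your explicit construction is exactly the sort of thing the authors are implicitly relying on.
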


From this, we can deduce the following:
\begin{corollary}
    The function $-t^2 \phi(t-2)$ has exactly two critical points $0$ and $t_0$, satisfying $1 < t_0 < 2$. The second derivative at $t_0$ is negative.
\end{corollary}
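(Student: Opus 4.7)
The plan is to set $F(t) = -t^2 \phi(t-2)$ and directly analyze its derivative. Factoring out a $t$, one gets $F'(t) = -t\bigl(2\phi(t-2) + t\phi'(t-2)\bigr)$, so the critical points consist of $t = 0$ together with the zeros of $g(t) := 2\phi(t-2) + t\phi'(t-2)$. The bulk of the work is locating a unique zero of $g$ and placing it in $(1,2)$.

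Before studying $g$, I would record consequences of the hypotheses on $\phi$. Since $\phi$ is odd, $\phi'$ is even and $\phi''$ is odd, so the unique zero of $\phi''$ in $(-1,1)$ is forced to be at $0$; combined with the assumption that $\phi'' > 0$ on $(-1,0)$, one gets $\phi'' < 0$ on $(0,1)$. Hence $\phi'$ attains its maximum on $[-1,1]$ at $t=0$, and since $\int_{-1}^{1} \phi'(t)\,dt = \phi(1) - \phi(-1) = 2$, this forces $\phi'(0) > 0$. Smoothness together with $\phi \equiv 1$ for $t \geq 1$ also gives $\phi'(\pm 1) = 0$, and hence $\phi'(t) > 0$ on $(0,1)$.

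Next I would split the real line into intervals and analyze $g$ on each. For $t \leq 1$, $\phi(t-2) = -1$ and $\phi'(t-2) = 0$, so $g \equiv -2$; for $t \geq 3$, $g \equiv 2$. On the key interval $(1,2)$, $t-2 \in (-1,0)$, and differentiating gives
\[
g'(t) = 3\phi'(t-2) + t\phi''(t-2),
\]
which is strictly positive because $\phi'(t-2) \geq 0$, $t > 0$, and $\phi''(t-2) > 0$ on $(-1,0)$. Thus $g$ is strictly increasing on $(1,2)$, and since $g(1) = -2 < 0$ and $g(2) = 2\phi'(0) > 0$, there is a unique zero $t_0 \in (1,2)$. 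On the remaining interval $(2,3)$, I would argue positivity of $g$ directly: $\phi(t-2) > 0$ and $\phi'(t-2) \geq 0$ with $t > 0$ force $g(t) > 0$.

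For the second derivative, writing $F'(t) = -t\,g(t)$ gives $F''(t) = -g(t) - t\,g'(t)$. At $t_0$ the first term vanishes, yielding $F''(t_0) = -t_0\bigl(3\phi'(t_0-2) + t_0\phi''(t_0-2)\bigr) < 0$ by exactly the sign analysis used above for $g'$ on $(1,2)$. The main subtlety is the uniqueness claim: $g$ is not globally monotone, so one cannot rule out extra zeros with a single derivative estimate. The resolution is to pair the monotonicity argument on $(1,2)$ with a direct positivity check on $(2,3)$, leveraging the precise signs of $\phi$, $\phi'$, and $\phi''$ on each subinterval; the tails $(-\infty,1]$ and $[3,\infty)$ are automatic from the constancy of $\phi$ outside $[-1,1]$.
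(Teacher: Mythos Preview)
Your proof is correct and follows essentially the same approach as the paper: factor $F'(t)=-t\,g(t)$ with $g(t)=2\phi(t-2)+t\phi'(t-2)$, use sign information to locate a zero of $g$, and show uniqueness via $g'(t)=3\phi'(t-2)+t\phi''(t-2)>0$ on $(1,2)$. You are in fact more careful than the paper in a couple of places: you explicitly compute $g(2)=2\phi'(0)>0$ to pin the zero inside the open interval $(1,2)$, you separately verify positivity of $g$ on $(2,3)$, and you explicitly derive $F''(t_0)=-t_0\,g'(t_0)<0$ rather than leaving it implicit.
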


\begin{proof}
    The first derivative is $-2t \phi(t-2) - t^2 \phi'(t-2)$, and so any nonzero critical point is a zero of $2\phi(t-2) + t \phi'(t-2)$. This function is strictly negative for $t \leq 1$ and strictly positive for $t > 2$, so there exists at least one zero in the desired range. Moreover, the second derivative is $3 \phi'(t-2) + t \phi''(t-2)$, which is positive for $1 < t < 2$, and so this zero is unique.
\end{proof}

We also require a smooth plateau function $\psi\co \R \to \R$ which is constant with value $1$ in an interval $[t_0-\delta,t_0 + \delta]$ and constant with value $0$ outside an interval $[1+\delta,3-\delta]$.

\begin{construction}
\label{con:criticalperturbation}
    Given any $H$-inner product spaces $V$, $W$, and $U$, together with a smooth $H$-equivariant function $h$ from the unit sphere of $U$ to $\R$, we define
    \[
    f(v,w,u) = ||v||^2 - ||w||^2 - ||u||^2\phi(||u||-2) + \epsilon \psi(||u||) h(\tfrac{u}{||u||}),
    \]
    where $\epsilon > 0$ is a fixed constant.
\end{construction}

\begin{proposition}
    The function $f\co V \oplus W \oplus U \to \R$ of Construction~\ref{con:criticalperturbation} satisfies the following properties.
    \begin{enumerate}
    \item For $||u|| \geq 3$, it agrees with the function $||v||^2 - ||w||^2 - ||u||^2$. In particular, it has no critical points in this range.
    \item For $||u|| \leq 1$, it agrees with the function $||v||^2 - ||w||^2 + ||u||^2$. In particular, the origin is the only critical point in this range and the Hessian is positive definite on $V \oplus U$.
    \item The function $f$ is smooth and equivariant.
    \item For sufficiently small $\epsilon > 0$, the other critical points are precisely points of the form $(0,0,t_0 u)$ for $u$ a point on the unit sphere which is a critical point of $h$.
    \item At such a critical point $t_0 u$, the negative eigenspace of the Hessian is contained in $W \oplus U$.
    \item If $u$ is a nondegenerate critical point of $h$, then $(0,0,t_0 u)$ is a nondegenerate critical point of $f$.
    \item If $u$ is a stable critical point of $h$ and $W$ is fixed by $H$, then $(0,0,t_0 u)$ is a stable critical point of $f$.
    \end{enumerate}
\end{proposition}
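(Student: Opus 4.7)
The plan is to verify the seven properties in order, reducing the analysis in the $U$-direction to the one-variable computations already carried out in the preceding corollary and in the defining properties of the plateau $\psi$. Properties (1), (2), (3) are immediate: for $\|u\|\geq 3$ we have $\phi(\|u\|-2)=1$ and $\psi(\|u\|)=0$; for $\|u\|\leq 1$ oddness gives $\phi(\|u\|-2)=-1$ and again $\psi(\|u\|)=0$; smoothness and equivariance are preserved because $h(u/\|u\|)$ is $H$-invariant in $u$ and $\|u\|$ is $H$-invariant. No critical points occur in the outer region because the gradient of $\|v\|^2-\|w\|^2-\|u\|^2$ is nonzero off the origin, and in the inner region the origin is the unique critical point with Hessian positive on $V\oplus U$.

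For property (4), I would first observe that $\partial_v f = 2v$ and $\partial_w f = -2w$, so any critical point must have $v=w=0$, and it remains to analyze $g(u):=-\|u\|^2\phi(\|u\|-2)+\epsilon\psi(\|u\|)h(u/\|u\|)$ in the annulus $1<\|u\|<3$. Writing $u=r\theta$ with $\theta$ on the unit sphere, the gradient of $g$ splits into a radial part
\[
\partial_r g = -2r\phi(r-2) - r^2\phi'(r-2) + \epsilon\psi'(r)h(\theta)
\]
and a tangential part $\epsilon\psi(r)\nabla h(\theta)$. Inside the plateau interval $[t_0-\delta,t_0+\delta]$, where $\psi\equiv 1$ and $\psi'\equiv 0$, the radial equation reduces to the one in the preceding corollary and vanishes only at $r=t_0$, while the tangential equation forces $\theta$ to be a critical point of $h$. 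On the complement of the plateau inside $(1,3)$, the expression $-2r\phi(r-2)-r^2\phi'(r-2)$ is bounded away from zero by compactness and the corollary, so for $\epsilon$ small enough the full radial derivative remains nonzero and no spurious critical points appear.

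For properties (5) and (6), the key point is that $f$ splits additively as $(\|v\|^2-\|w\|^2)+g(u)$, so at $p=(0,0,t_0\theta)$ the Hessian is block-diagonal with blocks $2I_V$, $-2I_W$, and $\hess{g}(t_0\theta)$. The mixed radial/tangential second derivative of $g$ at $p$ vanishes because each term contains a factor of $\psi'(r)$ or $\nabla h(\theta)$, both zero at $(t_0,\theta)$; the radial second derivative equals the second derivative of $-t^2\phi(t-2)$ at $t_0$, negative by the corollary; and the tangential Hessian equals $\epsilon\,\hess{h}(\theta)$. The positive-definite block on $V$ immediately shows that the negative eigenspace of $\hess{f}(p)$ sits inside $W\oplus U$, giving (5), and nondegeneracy of $\hess{h}(\theta)$ makes every remaining block nondegenerate, giving (6).

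The most delicate item is (7), and the main obstacle is bookkeeping the representation decompositions. Let $K=\stab(p)$, which coincides with the $H$-stabilizer of $\theta$ on the unit sphere of $U$. Because $W$ is $H$-fixed and $K\subseteq H$, we have $W^K=W$; because $\theta$ is $K$-fixed, the radial line $\R\theta$ in the tangent space to $U$ at $t_0\theta$ is also $K$-fixed. Both subspaces therefore lie in $(T_pM)^K$ and do not appear in $T'_pM$. The complement $T'_pM$ thus reduces to the non-$K$-invariant parts of $V$ and of the tangent space to the unit sphere at $\theta$, on which the Hessian acts as $2I$ and $\epsilon\,\hess{h}(\theta)$, respectively. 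Stability of $\theta$ as a critical point of $h$ gives positive-definiteness of the latter, while the former is manifestly positive, so $\hess{f}(p)$ is positive definite on $T'_pM$. The hypothesis that $W$ is $H$-fixed is precisely what prevents the negative-definite $W$-block from surviving in $T'_pM$ and destroying stability.
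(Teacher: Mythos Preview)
Your proposal is correct and follows essentially the same route as the paper: the same reductions to the one-variable corollary for items (1)--(4), the same block-diagonal Hessian computation in coordinates $(v,w,r,\theta)$ for (5)--(6), and the same stabilizer bookkeeping for (7). The only cosmetic differences are that you slice the annulus in (4) as plateau versus its complement while the paper slices as $\psi'$-vanishing versus $\psi'$-nonvanishing, and in (7) you verify positivity on $T'_pM$ directly whereas the paper verifies the equivalent condition that the negative eigenspace lies in the $K$-fixed subspace.
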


\begin{proof}
    \begin{enumerate}
    \item In this range, $\phi(||u||) = 1$ and $\psi(||u||) = 0$.
    \item Similarly, in this range, $\phi(||u||) = -1$ and $\psi(||u||) = 0$.
    \item Equivariance is clear from equivariance of the absolute value and the function $h$. Smoothness is also clear away from $||u|| = 0$; near $||u||=0$, however, smoothness is clear from the previous point.
    \item Away from $||u||=0$, we can express points in the form $(v, w, t u(y))$ for $t > 0$ and $y$ local coordinates on the unit sphere of $U$. In these coordinates, the gradient of $f$ takes the form
    \[
    \nabla f = (2v, -2w, \tfrac{\partial f}{\partial t}, \epsilon \psi(t) \nabla h).
    \]
    Therefore, to have a critical point we need $v = w = 0$; we also need either $\psi(t) = 0$ or $u$ is a critical point of $h$. For such points, the function reduces to
    \[
    f(0,0,tu) = -t^2 \phi(t-2) + \epsilon \psi(t) h(u).
    \]
    It remains to determine when the partial derivative with respect to $t$,
    \[
    \tfrac{\partial f}{\partial t}(0,0,tu) = -2t \phi(t-2) - t^2 \phi'(t-2) + \epsilon \psi'(t) h(u),
    \]
    vanishes.

    The plateau function $\psi(t)$ is constant except on intervals $[1+\delta, t_0 - \delta]$ and $[t_0 + \delta, 3 - \delta]$. On these intervals, $-2t \phi(t-2) - t^2 \phi'(t-2)$ is nowhere zero and both $\psi'(t)$ and $h$ are bounded. Therefore, there exists an $\epsilon$ sufficiently small that
    \[
    |\epsilon \psi'(t) h(u)| < |-2t \phi(t-2) - t^2 \phi'(t-2)|,
    \]
    and hence $\tfrac{\partial f}{\partial t}$ does not vanish.

    Finally, for $t$ outside these intervals, the function $\psi(t)$ is constant and so $\psi'(t) = 0$. Therefore, in the remaining regions,
    \[
    \tfrac{\partial f}{\partial t}(0,0,tu) = -2t \phi(t-2) - t^2 \phi'(t-2)
    \]
    which vanishes precisely at $t = t_0$.
    \item Again in the local coordinate system $(v, w, t u(y))$, the Hessian of $f$ is given in block form:
    \[
    \hess{f}(p) =
    \begin{bmatrix}
        2 Id & 0 & 0 & 0\\
        0 & -2 Id & 0 & 0\\
        0 & 0 & \psi(t) \hess{h}(u) & \epsilon \psi'(t) \nabla h\\
        0 & 0 & \epsilon \psi'(t) \nabla h^T & \tfrac{\partial^2 f}{\partial t}^2\\
    \end{bmatrix}
    \]
    At a critical point, $t = t_0$ and $\nabla h = 0$, and the second derivative $\tfrac{\partial^2 f}{\partial t}^2 (-t^2 \phi(t))$ is a negative constant $-c < 0$, so this reduces to
    \[
    \begin{bmatrix}
        2 Id & 0 & 0 & 0\\
        0 & -2 Id & 0 & 0\\
        0 & 0 & \hess{h}(u) & 0\\
        0 & 0 & 0 & -c
    \end{bmatrix}
    \]
    In particular, the negative eigenspace is contained inside $W \oplus U$.
    \item The block form of the Hessian at $(0,0,t_0 u)$ also makes it clear that the Hessian is nondegenerate at $(0,0,t_0 u)$ if and only if the Hessian of $h$ is nondenegerate at $u$.
    \item By the block form, the negative eigenspace of the Hessian at $(0,0,t_0 u)$ is spanned by $W$, by the negative eigenspace of $\hess{h}(u)$, and by $u$. Let $K$ be the stabilizer of this critical point $(0,0,t_0 u)$; $K$ is clearly $\stab(u) \subset H$. Then $u$ is fixed by $K$ by assumption. If $W$ is $H$-fixed then it is also $K$-fixed. Finally, if the critical points of $h$ are stable, then by assumption the negative eigenspace of $\hess{h}$ is also $K$-fixed. Therefore, if both of these hold, $(0,0,t_0 u)$ is a stable critical point.
    \end{enumerate}
\end{proof}

\begin{figure}
\centering
\begin{subfigure}[t]{.45\textwidth}
  \centering
  \includegraphics[scale = 0.25]{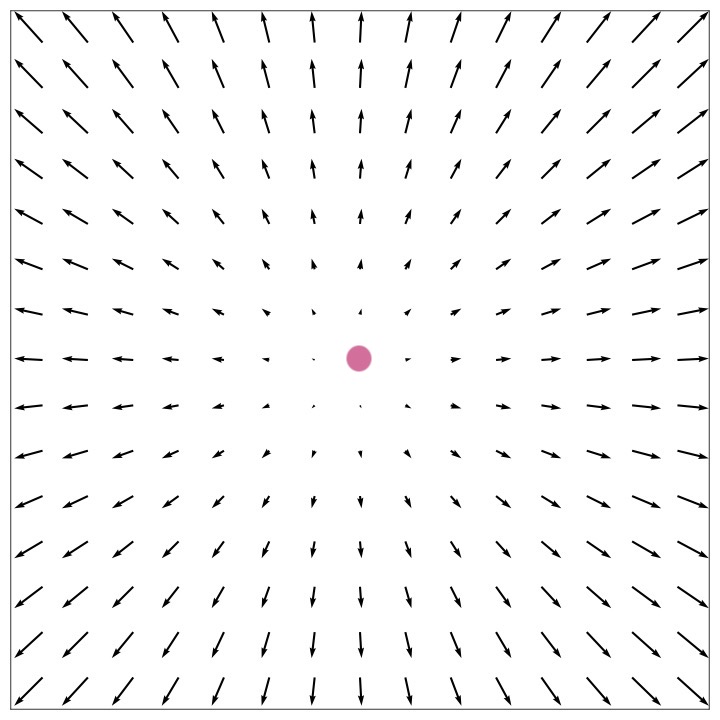} 
  \caption{\footnotesize Before modification, $o$ has index $2$ in $X$, and index $0$ in $X^G$.}
\end{subfigure}\quad\quad
\begin{subfigure}[t]{.45\textwidth}
  \centering
  \includegraphics[scale = 0.25]{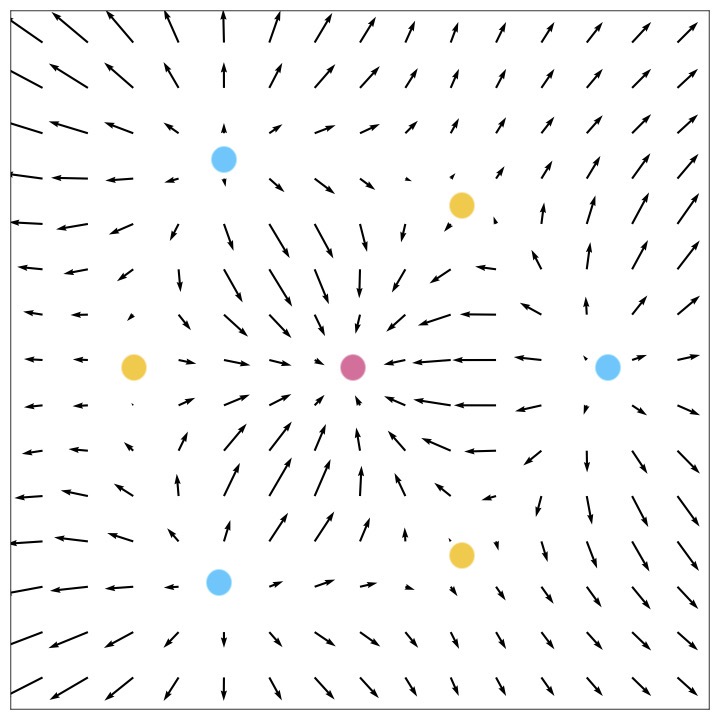}  
  \caption{\footnotesize After modification, six critical points are added into $X \backslash X^G$. $o$ has index $0$ in $X$ and index $0$ in $X^G$.}
\end{subfigure}
\caption{\footnotesize $G=C_3$ acts on $X=\R^2$ by rotation. $X^G = \{o\}$}
\label{fig: Z3 action}
\end{figure}
\vspace*{0cm}
\begin{figure}
\centering
\begin{subfigure}[t]{.45\textwidth}
  \centering
  \includegraphics[scale = 0.25]{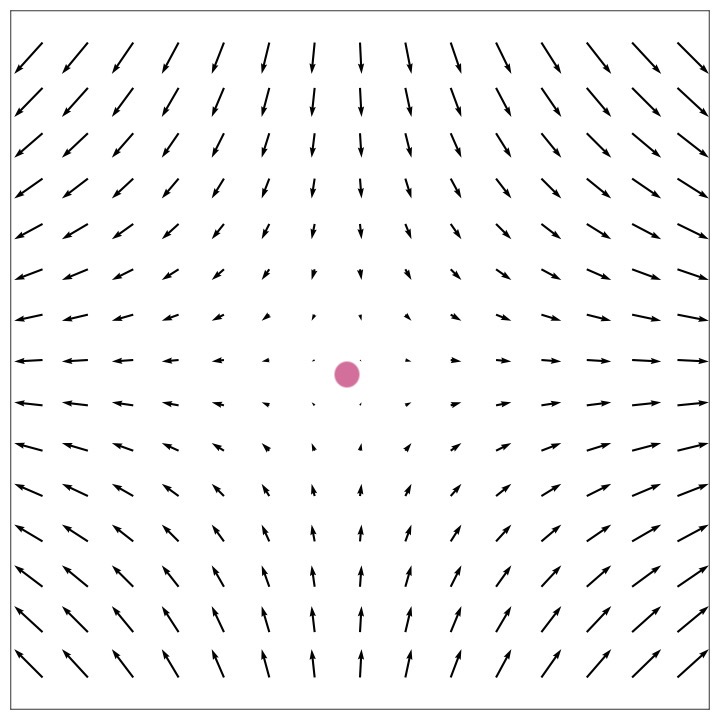} 
  \caption{Before modification, $o$ has index $1$ in $X$, and index $0$ in $X^G$.}
\end{subfigure}\quad\quad
\begin{subfigure}[t]{.45\textwidth}
  \centering
  \includegraphics[scale = 0.25]{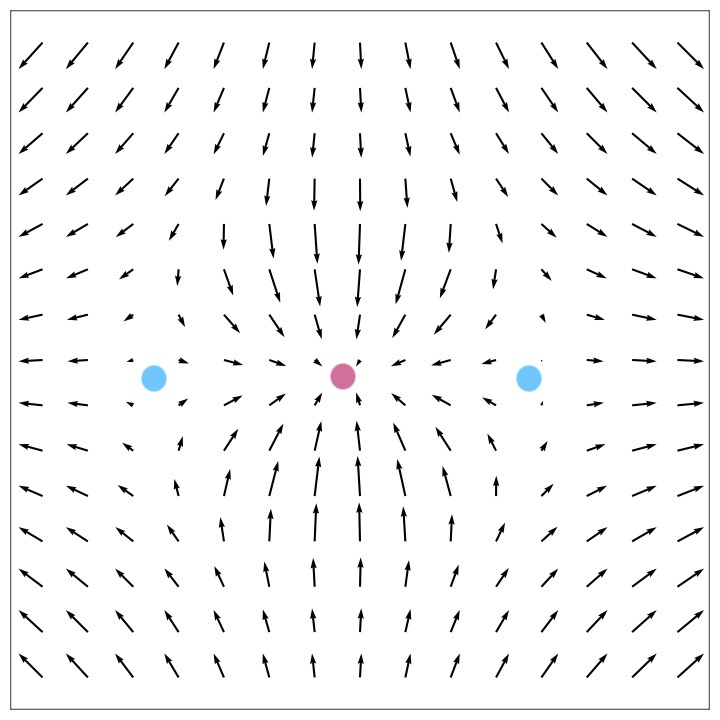}  
  \caption{\footnotesize After modification, two critical points are introduced into $X \backslash X^G$. $o$ has index $0$ in $X$ and index $0$ in $X^G$.}
\end{subfigure}
\caption{$G=C_2$ acts on $X=\R^2$ by reflection along the $y$-axis. $X^G = \{x = 0\}$.}
\label{fig: Z2 action}
\end{figure}

\section{Morse--Smale Metrics for Stable Morse Functions}
In this section, we demonstrate that for any stable Morse function, a generic equivariant metric ensures the Morse--Smale condition.
Smale's Theorem, originally due to Smale, establishes the genericity of the Morse--Smale condition.

\begin{theorem}[Smale's Theorem]\label{thm: smale theorem}
    Let $k \geq 1$ be an integer, and let $f$ be a Morse function of class $C^{k+1}$. Then for a generic Riemannian metric $\g$ of class $C^k$, the pair $(f, \g)$ is Morse--Smale.
\end{theorem}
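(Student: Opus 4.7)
The plan is to apply the Sard--Smale theorem to a universal moduli space of pairs consisting of a flow line together with a metric. Fix critical points $p, q \in \crit(f)$ and let $\mathcal{G}^k$ denote the separable Banach manifold of $C^k$ Riemannian metrics on $M$. For each fixed $\g \in \mathcal{G}^k$, the submanifolds $\unstableManifold{f}{\g}{p}$ and $\stableManifold{f}{\g}{q}$ are of class $C^k$, and transversality of their intersection at a point $x$ is equivalent to surjectivity of a linearized operator attached to the gradient flow equation along the flow line through $x$.

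First I would set up the universal moduli space. Let $\mathcal{P}(p,q)$ be a Banach manifold of paths $\gamma\co \R \to M$ of suitable Sobolev class, with exponential decay to $p$ at $-\infty$ and to $q$ at $+\infty$. Define a section
\[
F\co \mathcal{P}(p,q) \times \mathcal{G}^k \to \mathcal{E}, \qquad F(\gamma, \g) = \dot\gamma + \nabla_{\g} f \circ \gamma,
\]
of a Banach bundle $\mathcal{E}$ of $L^r$ vector fields along paths. Its zero set $\mathcal{U}(p,q) = F^{-1}(0)$ is the moduli space of gradient flow lines from $p$ to $q$ for varying metrics. The essential analytic input is that the vertical linearization $D_\gamma F$ is Fredholm of index $\morseIndex{f}(p) - \morseIndex{f}(q)$ and that the full linearization $DF$ is surjective, which is arranged by using perturbations of $\g$ supported near any chosen noncritical point on $\gamma$ to produce arbitrary $L^r$ variations of $\nabla_{\g} f \circ \gamma$ in a neighborhood. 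Consequently, $\mathcal{U}(p,q)$ is a Banach manifold.

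Next I would apply the Sard--Smale theorem to the projection $\pi\co \mathcal{U}(p,q) \to \mathcal{G}^k$, which is Fredholm of the same index as $D_\gamma F$. For $k$ sufficiently large relative to this index, the set of regular values of $\pi$ is residual in $\mathcal{G}^k$, and at any regular value $\g$ the fiber $\pi^{-1}(\g)$ is a $C^k$ manifold of the expected dimension---equivalently, the intersection $\unstableManifold{f}{\g}{p} \cap \stableManifold{f}{\g}{q}$ is transverse. Since $M$ is compact and $f$ is Morse, $\crit(f)$ is finite, so the countable intersection of these residual sets over all ordered pairs $(p,q)$ remains residual; a generic $\g \in \mathcal{G}^k$ thus makes $(f,\g)$ Morse--Smale, and a standard approximation argument handles the range of $k$.

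The main obstacle is establishing surjectivity of the universal linearization $DF$. Concretely, one must show that infinitesimal metric variations $\delta \g$ supported in an arbitrarily small neighborhood of a regular point $\gamma(t_0)$ can realize every prescribed direction in the cokernel of $D_\gamma F$. This reduces to the pointwise assertion that $\delta \g \mapsto \delta(\nabla_{\g} f)(\gamma(t_0))$ surjects onto $T_{\gamma(t_0)} M$---which is a simple linear-algebra computation using $df_{\gamma(t_0)} \neq 0$---together with a cutoff argument and an appeal to unique continuation for the linearized flow operator to convert pointwise flexibility into $L^r$-density along $\gamma$. Carrying this out carefully, and in particular checking that the cokernel elements cannot be simultaneously orthogonal to all admissible metric variations along any segment of $\gamma$, is the technical heart of the proof.
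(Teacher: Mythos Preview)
The paper does not give its own proof of this theorem; immediately after the statement it simply cites Smale's original paper for gradient-like vector fields and the lecture notes of Salamon and of Hutchings for gradient vector fields. Your outline---universal moduli space of (flow line, metric) pairs, surjectivity of the full linearization via local metric variations at a noncritical point, Sard--Smale applied to the Fredholm projection to the space of metrics, then intersecting over the finitely many pairs of critical points---is exactly the argument carried out in those references, so there is nothing substantive to compare: your approach \emph{is} the standard one the paper defers to.

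One observation worth recording, since the paper makes use of it later. When the equivariant analogue is proved, the paper extracts from this same argument the fact that the metric perturbations achieving surjectivity of $DF$ can be taken to be supported in arbitrarily small neighborhoods of the critical points, rather than at an interior point of the trajectory as you phrase it. The justification is that transversality of the ascending and descending manifolds at $\gamma(0)$ is equivalent to transversality at $\gamma(s)$ for every $s$, so the support of the perturbation can be slid toward an endpoint. Your formulation is the usual one and is perfectly valid for the nonequivariant statement; the localization near critical points only matters for the equivariant refinement, where one needs the perturbation to respect the stratification by fixed-point submanifolds.
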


The proof of this theorem for gradient-like vector fields can be found in \cite{smale1961ongradient}, and the proof for gradient vector fields is available in \cite{salamonMorse, hutchings2002lecture}.

Now, consider an equivariant Morse function $f$ on $M$ and an equivariant metric $g$ on $M$.
The goal of this section is to prove Theorem~\ref{thm: morse smale is generic fixing a stable f}, an equivariant version of Smale's theorem.
To this end, we start with the following lemmas:

\begin{lemma} \label{lemma: equivariant of tangent is the tangent of equivariant}
    For any $p \in M$, let $H \subset \stab(p)$ be a subgroup of the stabilizer of $p$.  
    Then the $H$-equivariant subspace $(T_p M)^H$ is identical to the tangent space $T_p (M^H)$.
\end{lemma}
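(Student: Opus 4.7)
The plan is to prove the two inclusions $T_p(M^H) \subset (T_p M)^H$ and $(T_p M)^H \subset T_p(M^H)$ separately. Note that one should first check that $M^H$ is actually a smooth submanifold of $M$ near $p$, so that the notation $T_p(M^H)$ is meaningful; this will in fact fall out of the argument below via the equivariant chart, which exhibits $M^H$ locally as a linear subspace.

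For the easy inclusion $T_p(M^H) \subset (T_p M)^H$, I would pick any tangent vector $v \in T_p(M^H)$ and represent it by a smooth curve $\gamma\co (-\epsilon,\epsilon) \to M^H$ with $\gamma(0) = p$ and $\gamma'(0) = v$. Since $\gamma(t) \in M^H$ for all $t$, we have $\sigma \cdot \gamma(t) = \gamma(t)$ for every $\sigma \in H$. Differentiating both sides at $t = 0$ using the chain rule yields $d\sigma_p (v) = v$, so $v$ is $H$-fixed.

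The reverse inclusion $(T_p M)^H \subset T_p(M^H)$ is the substantive direction, and the main input is the equivariant local chart from Corollary~\ref{cor:tangentchart}. Applying that corollary gives an $H$-equivariant diffeomorphism $\Phi$ from an open neighborhood of $p$ in $M$ onto an open neighborhood of $0$ in $T_p M$ with its linear $H$-action, sending $p$ to $0$ and with $d\Phi_p$ the identity of $T_p M$. Under $\Phi$, the fixed-point set $M^H$ corresponds locally to the $H$-fixed set of $T_p M$ with its linear action, which is precisely the linear subspace $(T_p M)^H$. In particular, $M^H$ is a smooth submanifold near $p$, and $T_p(M^H)$ corresponds to $T_0((T_p M)^H) = (T_p M)^H$. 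Since $d\Phi_p$ is the identity, this identification shows that $(T_p M)^H \subset T_p(M^H)$.

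The only subtle point, and the main obstacle, is the claim that under a linear $H$-action the fixed set of $H$ in $T_p M$ equals the linear subspace $(T_p M)^H$ and that its tangent space at the origin is itself. This is immediate for a linear representation, so no transversality or averaging is required beyond what is already supplied by the equivariant chart. Putting the two inclusions together gives the equality.
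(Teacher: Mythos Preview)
Your proof is correct, but it follows a different route from the paper's. The paper argues the nontrivial inclusion $(T_pM)^H \subset T_p(M^H)$ by an averaging/flow argument: given $v \in (T_pM)^H$, extend $v$ to an arbitrary vector field $V$, average to obtain the $H$-equivariant vector field $W = \sum_{\sigma \in H} d\sigma(V)$, and observe that the integral curve of $W$ through $p$ must stay in $M^H$, so its initial velocity (a nonzero scalar multiple of $v$) lies in $T_p(M^H)$. Your argument instead invokes Corollary~\ref{cor:tangentchart} to linearize the $H$-action near $p$, reducing the claim to the trivial fact that the fixed set of a linear action is a linear subspace equal to its own tangent space at the origin. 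Your route is cleaner and yields the submanifold structure of $M^H$ for free; the paper's argument is more self-contained in that it avoids the equivariant-chart machinery of Section~4, though in the paper's logical order that machinery is already available and there is no circularity.

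One minor caution: Corollary~\ref{cor:tangentchart} as stated only promises an $H$-equivariant diffeomorphism, not that $d\Phi_p$ is literally the identity. This is harmless---$d\Phi_p$ is then an $H$-equivariant linear automorphism of $T_pM$, hence carries $(T_pM)^H$ bijectively onto itself, and combined with the easy inclusion this still forces $T_p(M^H) = (T_pM)^H$---but you should phrase that step accordingly rather than asserting the differential is the identity.
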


\begin{proof}
    It is clear that $T_p (M^H) \subseteq (T_p M)^H$. For any $v \in (T_p M)^H$, we can extend $v$ to a vector field $V$ on $M$. Then, the vector field $W = \sum_{\sigma \in H} d\sigma (V)$ is an $H$-equivariant vector field on $M$. The flow of $W$ maps $M^H$ to itself. Let $\alpha: (-\epsilon, \epsilon) \to M$ be the integral curve of $W$ passing $p$, more precisely, $\alpha(0) = p$ and $\alpha' = W(\alpha)$. Then $\alpha$ is contained in $M^H$. This implies that $\alpha'(0) \in T_p (M^H).$ But on the other hand, $\alpha'(0) = v$.
\end{proof}

\begin{lemma}
    For any subgroup $H\subset G$,
    the set of critical points of the restricted function $f|_{M^H}$ is precisely the intersection of the critical point set $\crit(f)$ with the fixed-point submanifold $M^H$, i.e., $\crit(f|_{M^H}) = \crit(f) \cap M^H$.
\end{lemma}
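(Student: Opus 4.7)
The plan is to prove the set equality by establishing both inclusions separately. The forward direction $\crit(f) \cap M^H \subseteq \crit(f|_{M^H})$ is essentially tautological: if $p \in M^H$ and $df_p = 0$ on all of $T_pM$, then identifying $T_p(M^H)$ with $(T_pM)^H \subseteq T_pM$ via Lemma~\ref{lemma: equivariant of tangent is the tangent of equivariant}, the restricted differential $d(f|_{M^H})_p$ is also zero, so $p$ is critical for $f|_{M^H}$.

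For the reverse inclusion $\crit(f|_{M^H}) \subseteq \crit(f) \cap M^H$, suppose $p \in M^H$ is a critical point of $f|_{M^H}$. By Lemma~\ref{lemma: equivariant of tangent is the tangent of equivariant}, the hypothesis says $df_p$ vanishes on the subspace $(T_pM)^H$, and the task is to promote this to vanishing on all of $T_pM$. The key observation is that $H$-equivariance of $f$ together with $\sigma p = p$ for $\sigma \in H$ forces $df_p$ to be an $H$-invariant linear functional on $T_pM$: differentiating the identity $f = f \circ \sigma$ at $p$ gives $df_p = df_p \circ d\sigma_p$ for every $\sigma \in H$.

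I would then invoke the $H$-equivariant splitting $T_pM = (T_pM)^H \oplus V'$, where $V'$ is the kernel of the averaging operator $\tfrac{1}{|H|}\sum_{\sigma \in H} d\sigma_p$ (a Maschke-type decomposition in characteristic zero, exactly parallel to the definition of $T'_pM$ given in the introduction). For any $H$-invariant linear functional $\ell$ on $T_pM$, the averaging identity
\[
\ell(v) \;=\; \frac{1}{|H|}\sum_{\sigma \in H} \ell(d\sigma_p v) \;=\; \ell\!\left(\frac{1}{|H|}\sum_{\sigma \in H} d\sigma_p v\right)
\]
shows $\ell(v) = 0$ whenever $v \in V'$. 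Applying this to $\ell = df_p$ and combining with the vanishing on $(T_pM)^H$ yields $df_p \equiv 0$ on $T_pM$, so $p \in \crit(f)$.

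Only the reverse inclusion carries any content, and its entire force comes from the representation-theoretic fact that an $H$-invariant linear functional on a finite-dimensional $H$-representation in characteristic zero must vanish on the averaging kernel. There is no analytic obstacle to speak of; once the equivariant splitting of $T_pM$ is in place, the rest is a one-line manipulation. The main (mild) subtlety is simply that restriction of differentials can in principle lose information, and the averaging argument is precisely what prevents this from happening here.
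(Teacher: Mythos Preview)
Your proof is correct and takes essentially the same approach as the paper: both hinge on the $H$-invariance of $df_p$ and an averaging argument over $H$. The only cosmetic difference is that the paper argues by contradiction (averaging a vector $v$ with $df_p(v)>0$ to land in $(T_pM)^H$ and derive $0=df_p(w)=|H|\,df_p(v)>0$), whereas you phrase the same computation directly as ``an $H$-invariant functional vanishes on the averaging kernel''; these are dual formulations of the same one-line identity.
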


\begin{proof}
    Suppose $p \in \crit(f|_{M^H})$ and assume to the contrary that there exists a non-zero vector $v \in (T_p M^H)^\perp \subset T_p M$ such that $df(v) > 0$, where $(T_p M^H)^\perp$ is the orthogonal complement of $T_p M^H$.
    Consider the vector $w = \sum_{\sigma \in H} d\sigma (v)$. Note that $df(w) = \sum_{\sigma \in H} df(d\sigma (v)) = \sum_{\sigma \in H} d(f \circ \sigma) (v) = \sum_{\sigma \in H} df  (v) > 0.$
    However, since $w$ is equivariant under $H$, we get $w\in T_p (M^H)$ by Lemma~\ref{lemma: equivariant of tangent is the tangent of equivariant}. But by assumption, $df = 0$ on $T_p M^H$, which contradicts the fact that $df(w) >0$.
\end{proof}

\begin{lemma}
    Suppose $p \in \crit(f)$ is a stable critical point and let $H = \stab(p)$ denote its stabilizer subgroup. Then the descending manifold $\unstableManifold{f}{g}{p}$ of $p$ is fixed by $H$, i.e., $\unstableManifold{f}{g}{p} \subset M^H$.
\end{lemma}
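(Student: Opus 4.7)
The plan is to show that the descending manifold is equal to the descending manifold of the restricted function $f|_{M^H}$, and the latter automatically lies in $M^H$. The two key inputs are the equivariance of the gradient flow and stability.

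First I would observe that because both $f$ and $g$ are $H$-equivariant, the gradient vector field $\nabla_g f$ is $H$-equivariant as well. At any point $q \in M^H$, the vector $\nabla_g f(q) \in T_q M$ is therefore $H$-fixed, hence lies in $(T_q M)^H = T_q M^H$ by Lemma~\ref{lemma: equivariant of tangent is the tangent of equivariant}. So $\nabla_g f$ restricts to a vector field on $M^H$; a short calculation with the defining identity $g(\nabla_g f, v) = df(v)$ shows this restriction is exactly $\nabla_{g|_{M^H}}(f|_{M^H})$. Consequently, $M^H$ is invariant under the gradient flow of $f$, and the flow on $M^H$ agrees with the gradient flow of $f|_{M^H}$.

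Next, let $W := \unstableManifold{f|_{M^H}}{g|_{M^H}}{p}$ denote the descending manifold of $p$ for the restricted data. By the previous step, $W = \unstableManifold{f}{g}{p} \cap M^H$. The Hessian of $f|_{M^H}$ at $p$ is obtained by restricting $\hess f(p)$ to $T_p M^H$. Stability says that the full negative eigenspace $E^-$ of $\hess f(p)$ is contained in $T_p M^H$, so $E^-$ is also the negative eigenspace of $\hess{f|_{M^H}}(p)$, and $p$ has the same Morse index whether viewed as a critical point of $f$ or of $f|_{M^H}$. In particular, $\dim W = \dim \unstableManifold{f}{g}{p}$, equal to $\dim E^-$.

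To conclude, I would argue that $W$ is both open and closed in $\unstableManifold{f}{g}{p}$: closed because $M^H$ is closed in $M$, and open because $W$ is a submanifold of $\unstableManifold{f}{g}{p}$ of the same dimension. Since $\unstableManifold{f}{g}{p}$ is connected (it is diffeomorphic to the open disc $E^-$) and contains the point $p \in W$, we get $W = \unstableManifold{f}{g}{p}$, and hence $\unstableManifold{f}{g}{p} \subset M^H$. The only subtle ingredient is matching the two negative eigenspaces, which is exactly where stability is used; everything else is a routine consequence of equivariance of the gradient flow.
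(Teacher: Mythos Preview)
Your argument is correct. The paper actually states this lemma without proof, so there is no approach to compare against; your route---identifying $\unstableManifold{f}{g}{p}\cap M^H$ with the descending manifold of the restricted pair $(f|_{M^H},g|_{M^H})$, matching dimensions via stability, and finishing with a connectedness argument---is a clean way to fill the gap. The only step that merits one more sentence is the openness of $W$ in $\unstableManifold{f}{g}{p}$: since descending manifolds are a priori only injectively immersed in $M$, ``same-dimensional submanifold'' is not automatic globally. It suffices to check openness near $p$, where both $W$ and $\unstableManifold{f}{g}{p}$ are embedded $k$-discs tangent to $E^-$ by the stable manifold theorem, and then propagate to all of $W$ using that the gradient flow preserves both $W$ and $\unstableManifold{f}{g}{p}$.

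A slightly shorter alternative, for comparison: since $H$ fixes $p$ and commutes with the gradient flow, $H$ acts smoothly on the manifold $\unstableManifold{f}{g}{p}\cong\R^k$ fixing $p$, and the differential of this action at $p$ is the $H$-action on $E^-$, which is trivial by stability. A finite-order diffeomorphism fixing a point with trivial differential there is the identity nearby (Bochner linearization via an invariant metric), hence everywhere by connectedness. Both proofs isolate the role of stability in the same place---forcing $E^-\subset (T_pM)^H$---but yours stays closer to the Morse-theoretic framework used in the surrounding lemmas and avoids appealing to linearization.
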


\begin{lemma}\label{lemma: transversality inside implies outside}
    Suppose $(f,g)$ is equivariant, and $f$ is stable. 
    Let $H$ be any subgroup of $G$.
    For $p,q \in \crit(f)\cap M^H$, let $\mathscr D = \unstableManifold{f}{g}{p}$ and $\mathscr A = \stableManifold{f}{g}{q}$. If $\mathscr D$ intersects $\mathscr A \cap M^H$ transversely inside $M^H$, then $\mathscr D$ intersects $\mathscr A$ transversely inside $M$.
\end{lemma}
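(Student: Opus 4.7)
My plan is to decompose the transversality condition in $M$ into its $H$-fixed and $H$-non-fixed parts and treat them separately: the fixed part will follow directly from the hypothesis, while the non-fixed part reduces to the single subspace inclusion $T'_xM\subset T_x\mathscr A$, which I then propagate from $q$ via the gradient flow. To begin, any intersection point $x\in\mathscr D\cap\mathscr A$ lies in $M^H$, since by the preceding lemma, stability of $p$ together with $H\subset\stab(p)$ forces $\mathscr D\subset M^{\stab(p)}\subset M^H$. Using the $H$-equivariant metric $g$, I decompose $T_xM=(T_xM)^H\oplus T'_xM$, where $T'_xM$ is the kernel of the $H$-averaging operator. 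Since $q$ is $H$-fixed and the gradient flow is $H$-equivariant, $\mathscr A$ is $H$-invariant, so $T_x\mathscr A$ inherits an $H$-action compatible with this decomposition, and applying Lemma~\ref{lemma: equivariant of tangent is the tangent of equivariant} to $\mathscr A$ identifies the fixed part $(T_x\mathscr A)^H$ with $T_x(\mathscr A\cap M^H)$. The hypothesis then reads $T_x\mathscr D+(T_x\mathscr A)^H=(T_xM)^H$, and since $T_x\mathscr D\subset(T_xM)^H$, full transversality in $M$ is equivalent to the single inclusion $T'_xM\subset T_x\mathscr A$.

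For this inclusion I first check it at $q$: by stability, the negative eigenspace of $\hess{f}(q)$ lies in $(T_qM)^{\stab(q)}\subset(T_qM)^H$, so $T_q\mathscr A$---the positive eigenspace of $\hess{f}(q)$---contains the orthogonal complement of $(T_qM)^H$, namely $T'_qM$. The equivariant Morse lemma then supplies $H$-equivariant linear coordinates on a neighborhood $U$ of $q$ in which $\mathscr A\cap U$ is the linear subspace $T_q\mathscr A$, and in these coordinates the $H$-action on each nearby tangent space is the same as the $H$-action on $T_qM$; consequently $T'_yM=T'_qM\subset T_y\mathscr A$ for every $y\in\mathscr A\cap U$. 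For a general $x$, the flow $\phi_t$ of $-\nabla f$ drives $\phi_t(x)\to q$, so eventually $y:=\phi_t(x)\in U$, and because $\phi_{-t}$ is an $H$-equivariant diffeomorphism preserving $\mathscr A$, its differential is an $H$-equivariant linear isomorphism sending $T_y\mathscr A$ onto $T_x\mathscr A$ and $T'_yM$ onto $T'_xM$; the inclusion transports to $T'_xM\subset T_x\mathscr A$, completing the argument.

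The hard part will be the local step at $q$. One must interpret stability correctly when $\stab(q)\supsetneq H$ (stability is stated relative to the full stabilizer, but is being used here relative to the smaller group $H$), and then verify via the Morse coordinates that $T'_yM$ does not drop in dimension as $y$ moves along $M^H$ away from $q$, so that the pointwise inclusion at $q$ really does extend across a neighborhood and can then be transported along the flow.
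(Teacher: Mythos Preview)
Your proof is correct and follows essentially the same strategy as the paper: both reduce the problem to a statement about $\mathscr A$ alone, verify it at $q$ via stability, and then transport it along the gradient flow. The only difference is cosmetic---the paper phrases the key reduction as ``$\mathscr A$ intersects $M^H$ transversely in $M$'' (i.e., $T_x\mathscr A + T_xM^H = T_xM$), whereas you phrase it as the equivalent inclusion $T'_xM\subset T_x\mathscr A$; your worry about $\stab(q)\supsetneq H$ is handled exactly as you say, since $(T_qM)^{\stab(q)}\subset (T_qM)^H$.
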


\begin{proof}
    It suffices to show that $\mathscr A$ intersects $M^H$ transversely near $q$. Note that $T_q M = E_+ \oplus E_-$, where $E_\pm$ are the positive and negative eigenspaces of the Hessian $\hess{f}(q)$. Suppose $E_+ = E_+^H \oplus (E_+^H)^\perp$, where $E_+^H$ is its $H$-equivariant subspace, and $(E_+^H)^\perp$ is its orthogonal complement. By the stability assumption, $E_-$ is $H$-equivariant, hence $E_+^H \oplus E_- = T_q M^H$. Therefore, $T_q M = E_+ \oplus E_- = T_q \mathscr A + T_q M^H$, since $E_+^H \subset E_+ = T_q \mathscr A$. Hence, $\mathscr A$ and $M^H$ intersect transversely at $q$, so they intersect transversely in a small neighborhood $\openSet{U}_q$ in $M$ near $q$. Next, for any other point $x \in \mathscr A \cap M^H$, the negative gradient flow maps a sufficiently small neighborhood $\openSet{V}_x$ of $x$ diffeomorphically onto a small open subset $\openSet{W} \subset \openSet{U}_q$. The diffeomorphism also maps $M^H \cap \openSet{V}_x$ onto $M^H \cap \openSet{W}$, and maps $\mathscr A\cap \openSet{V}_x$ onto $\mathscr A\cap \openSet{W}$. But inside $\openSet{W}$, $M^H$ and $\mathscr A$ intersect transversely.
\end{proof}

\begin{corollary}\label{cor: key lemma for MS}
    Suppose $(f,g)$ is equivariant, and $f$ is stable.     
    For $p,q \in \crit(f)$, 
    let $H = \stab(p)$,
    $\mathscr D = \unstableManifold{f}{g}{p}$, and $\mathscr A = \stableManifold{f}{g}{q}$.
    Suppose that $\mathscr D$ intersects $\mathscr A \cap M^H$ transversely inside $M^H$. Then $\mathscr D$ intersects $\mathscr A$ transversely inside $M$.
\end{corollary}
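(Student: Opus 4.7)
The plan is to deduce this corollary as the contrapositive of Lemma~\ref{lemma: transversality inside implies outside}. The one subtle point is that the lemma requires both endpoints to lie in $M^H$, whereas the corollary only hypothesizes $p \in \crit(f) \cap M^H$ (via $H = \stab(p)$). So the first task is to verify that $q \in M^H$ whenever the intersection fails to be transverse.

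First I would invoke the stability of $p$: by the preceding lemma on descending manifolds of stable critical points, $\mathscr D = \unstableManifold{f}{g}{p}$ is pointwise fixed by $H$, so $\mathscr D \subset M^H$. The non-transversality hypothesis means there exists a point $x \in \mathscr D \cap \mathscr A$ at which $T_x \mathscr D + T_x \mathscr A \neq T_x M$; in particular $\mathscr D \cap \mathscr A$ is non-empty. For any such $x$, one has $x \in M^H$, and the positive gradient flow $\varphi_t(x)$ converges to $q$ as $t \to +\infty$. Because $(f,g)$ is equivariant, $\nabla f$ is $H$-equivariant and its flow preserves the closed submanifold $M^H$; hence the entire forward trajectory stays in $M^H$, and consequently $q = \lim_{t \to +\infty}\varphi_t(x) \in M^H$.

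With $p, q \in \crit(f) \cap M^H$ in hand, the rest is immediate. If $\mathscr D$ intersected $\mathscr A \cap M^H$ transversely inside $M^H$, then Lemma~\ref{lemma: transversality inside implies outside} would give a transverse intersection of $\mathscr D$ with $\mathscr A$ inside $M$, contradicting the hypothesis. Therefore $\mathscr D$ meets $\mathscr A \cap M^H$ non-transversely in $M^H$. The only real obstacle, as anticipated, is the bookkeeping step forcing $q \in M^H$; after that, the contrapositive argument is automatic.
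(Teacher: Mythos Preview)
Your argument is correct and follows the same route as the paper: the corollary is simply the contrapositive of Lemma~\ref{lemma: transversality inside implies outside}, once one checks that $q \in M^H$. The paper does not give a separate proof of the corollary, but the very observation you supply (``if $\mathscr D_g(p) \cap \mathscr A_g(q) \neq \emptyset$, then $q \in M^H$'') appears verbatim in the paper's proof of Theorem~\ref{thm: morse smale is generic fixing a stable f}, so you have reconstructed exactly the intended reasoning.

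One small slip: since $\mathscr A$ is the ascending (stable) manifold of $q$, it is the \emph{negative} gradient flow that carries $x \in \mathscr A$ to $q$ as $t \to +\infty$, not the positive one. This does not affect your conclusion, because the gradient flow in either direction is $H$-equivariant and hence preserves the closed submanifold $M^H$, so the limit $q$ still lies in $M^H$.
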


\begin{proof}[Proof of Theorem~\ref{thm: morse smale is generic fixing a stable f}]
    We prove a stronger result:
    \begin{claim}
    Fix an equivariant, $C^k$-metric $g$. There exists an equivariant $C^k$-metric $g'$ such that
    \begin{enumerate}
    \item $g'$ is arbitrarily $C^k$-close to $g$,
    \item $g'$ and $g$ agree away from arbitrarily small neighborhoods of the critical points of $f$, and
    \item $(f,g')$ is Morse--Smale.
    \end{enumerate}
    \end{claim}
    The claim says that the Morse--Smale condition is dense in the $C^k$-topology of metrics.
    Together with the fact that the Morse-Smale condition is an open condition (see for example Proposition 3.4.3 in \cite{audin2010morse}), 
    we proved the theorem.
    
    The proof of this claim when $G = \{e\}$ follows from a straightforward modification of the proof of Theorem~\ref{thm: smale theorem}. Precisely, for any trajectory $\gamma$ of the negative gradient flow connecting two critical points, the transversality of the ascending and descending manifolds at $\gamma(0)$ is equivalent to the transversality at $\gamma(s)$ for any $s$. Therefore, by tracing back the proofs in \cite{salamonMorse, hutchings2002lecture}, one can see that it suffices to perturb the metric in small neighborhoods around the critical points. This proves the claim for the trivial group action case.

    For the general case, the argument is as follows: for any $p \in \crit(f)$, let $H = \stab(p)\subset G$ be its stabilizer subgroup. Let $V_p \subset M$ be a sufficiently small $H$-equivariant neighborhood of $p$ such that $V_{\sigma p} = \sigma(V_p)$ and $V_p \cap V_{\sigma p} = \emptyset$ for $\sigma \in G-H$. By the stability assumption, $\mathscr D_{g}(p) \subset M^H$. Note that for any other critical point $q \in \crit(f)$, if $\mathscr D_g(p) \cap \mathscr A_g(q) \neq \emptyset$, then $q \in M^H$. Moreover, $\mathscr A_g(q)\cap M^H = \mathscr A_{g|_{M^H}}(q)$, the ascending manifold of $q$ in $M^H$ with respect to the Morse function ${f|_{M^H}}$ and the metric $g|_{M^H}$. By the above claim for the trivial group action case, we perturb $g|_{M^H}$ in $M^H$ inside $V_p \cap M^H$ to a metric $g'_H$ of $M^H$ so that $\mathscr D_{g'_H}(p)$ intersects $\mathscr A_{g'_H}(q)$ transversely in $M^H$. Then we extend $g'_H|_{V_p \cap M^H}$ to an $H$-equivariant metric in $V_p$, denoted by $g'_{V_p}$, by taking an arbitrary extension and averaging over $H$. Finally, for any $\sigma\in G$, we define $g'_{V_{\sigma p}}$ by $\sigma_\sharp g'_{V_p}$, which is well-defined since $g'_{V_p}$ is $H$-equivariant. 
    By construction, $\mathscr D_{g'}(p)$ intersects $\mathscr A_{g'}(q) \cap M^H$ transversely in $M^H$ for any critical point $q$. By Corollary~\ref{cor: key lemma for MS}, $(f,g')$ is Morse-Smale.
\end{proof}

\section{Equivariant homology theories}

\subsection{Equivariant (co)homology and Bredon (co)homology}

In this section we will recall some details about equivariant homology theories. The interested reader should consult \cite[\S I.3]{May1996Alaskanotes} and the references there for more details.

\begin{definition}
For a fixed group $G$, an \emph{equivariant homology theory for $G$-spaces} is made up of functors
\[
(X,A) \mapsto h_n(X,A),
\]
assigning an abelian group to each pair of a $G$-space $X$ and a $G$-invariant subspace $A \subset X$. (By convention, we write $h_n(X)$ for $h_n(X,\emptyset)$.) These are required to satisfy analogues of the Eilenberg--Steenrod axioms.
\begin{itemize}
    \item Functoriality: it is functorial in $G$-equivariant maps $(X,A) \to (Y,B)$.
    \item Homotopy invariance: any two functions that are $G$-equivariantly homotopic are taken to the same map by $h$.
    \item Additivity: disjoint unions are taken to direct sums.
    \item Excision: for a $G$-invariant subspace $V$ such that $\overline V \subset A^\circ$, the map $(X\setminus V, A \setminus V) \to (X,A)$ induces an isomorphism on $h_*$.
    \item Long exact sequence: there are natural transformations $\partial: h_n(X,A) \to h_{n-1}(A)$ such that the sequence
    \[
    \cdots \to h_n(A) \to h_n(X) \to h_n(X,A) \to h_{n-1}(A) \to \cdots
    \]
    is exact. 
\end{itemize}
This homology theory is \emph{ordinary} if it satisfies an additional axiom.
\begin{itemize}
    \item Dimension: for any subgroup $H \leq G$, $h_n(G/H) = 0$ for $n \neq 0$.
\end{itemize}
Similarly, we define equivariant cohomology theories.
\end{definition}

\begin{example}
    For any subgroup $H \leq G$ and any $K \leq W_G H$, the functor
    \[
        (X,A) \mapsto
        H_n((X/H)^K, (A/H)^K)
    \]
    is an ordinary equivariant homology theory.
\end{example}

\begin{example}
    Borel equivariant homology
    \[
        (X,A) \mapsto H_n^G(X,A) = H_n(EG \times_G X, EG \times_G A)
    \]
    is a (non-ordinary) equivariant homology theory.
\end{example}

\begin{definition}
    The \emph{orbit category} $\orb_G$ is the category of $G$-sets of the form $G/H$ for $H$ any subgroup of $G$. In particular, for any subgroups $H$ and $K$, maps $G/H \to G/K$ are of the form $gH \mapsto gxK$ for $xK \in (G/K)^H$.
    
    A \emph{covariant Bredon coefficient system} is a functor
    \[
    \underbar M\co \orb_G \to \mathrm{Ab},
    \]
    and a \emph{contravariant Bredon coefficient system} is a functor
    \[
    \underbar N\co \orb_G^{op} \to \mathrm{Ab}.
    \]
\end{definition}

\begin{remark}
    When the group $G$ is fixed, for convenience it is common to write $\underbar M(H)$ rather than $\underbar M(G/H)$.
\end{remark}

In particular, any equivariant homology theory $h_*$ has associated covariant Bredon coefficient systems $\underbar h_n\co G/H \mapsto h_n(H)$, and any equivariant cohomology theory $h^*$ has an associated contravariant Bredon coefficient systems $G/H \mapsto h^n(H)$. The following is a converse.

\begin{theorem}
    Ordinary homology theories are determined by their underlying coefficient system, as follows.
    \begin{enumerate}
        \item Associated to any covariant coefficient system $\underbar M$, there is an equivariant homology theory $H^{Br}_*(-;\underbar M)$, called \emph{Bredon homology with coefficients in $\underbar M$}.
        \item Bredon homology $H^{Br}_*(-;\underbar M)$ is functorial in $\underbar M$.
        \item Bredon homology takes \emph{weak} equivalences of $G$-spaces to isomorphisms.
        \item If $h_*$ is an ordinary homology theory with underlying coefficient system $\underbar h_0$, then there is a natural transformation $H^{Br}_*(-;\underbar h_0) \to h_*(-)$ which is an isomorphism for all $G$-CW pairs $(X,A)$. 
    \end{enumerate}
    A dual result holds for ordinary cohomology theories.
\end{theorem}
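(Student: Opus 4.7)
The plan is to construct Bredon homology as the homology of a cellular chain complex indexed by the orbit category, verify the Eilenberg--Steenrod axioms on $G$-CW pairs, extend to arbitrary $G$-spaces by $G$-CW approximation, and then run the standard Eilenberg--Steenrod uniqueness argument against any ordinary theory $h_*$.

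For the construction, given a $G$-CW pair $(X,A)$, the assignment $G/H \mapsto X^H$ is a contravariant functor $\orb_G \to \mathrm{Top}$ (using that $\mathrm{Map}_G(G/H, X) = X^H$ is contravariant in $G/H$). The relative cellular chains $G/H \mapsto C_n(X^H, A^H)$ therefore form a contravariant Bredon coefficient system. For a covariant coefficient system $\underbar{M}$, I define
\[
C_n^{Br}(X,A;\underbar{M}) = C_n(X^{(-)}, A^{(-)}) \otimes_{\orb_G} \underbar{M},
\]
which concretely is $\bigoplus_\alpha \underbar{M}(G/H_\alpha)$ indexed by the equivariant $n$-cells $G/H_\alpha \times D^n$ of $X \setminus A$. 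The cellular boundary induces a differential, and $H^{Br}_n$ is its homology. Most Eilenberg--Steenrod axioms are inherited directly: additivity, excision, and the long exact sequence from cellular chains; $G$-homotopy invariance from equivariant cellular approximation; and the dimension axiom because $G/H$ has a single $0$-cell of stabilizer $H$ and no higher cells. Functoriality in $\underbar{M}$ is built into the tensor product, giving (2). To extend to arbitrary $G$-spaces I would use $G$-CW approximation, which exists and is essentially unique; the equivariant Whitehead theorem then makes the construction well-defined on weak-equivalence classes, giving (3).

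For uniqueness (4), iterated excision and the long exact sequences of the skeletal triples $(X^{(n)} \cup A, X^{(n-1)} \cup A, A)$, combined with the dimension axiom applied to the orbit type of each cell, identify
\[
h_n(X^{(n)} \cup A, X^{(n-1)} \cup A) \cong \bigoplus_\alpha \underbar{h}_0(G/H_\alpha),
\]
which is precisely the group of $n$-chains in $C_*^{Br}(X,A;\underbar{h}_0)$. Tracing the connecting homomorphism of the triple $(X^{(n)} \cup A, X^{(n-1)} \cup A, X^{(n-2)} \cup A)$ through these identifications shows that the induced differential agrees with the Bredon boundary, yielding a natural chain map and hence a natural transformation $H^{Br}_*(-;\underbar{h}_0) \to h_*(-)$. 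This transformation is an isomorphism on $0$-dimensional pairs by the dimension axiom, and extends to all $G$-CW pairs by induction on the skeletal filtration using the five-lemma. The dual argument for cohomology uses the contravariant system $G/H \mapsto h^0(G/H)$ and $\mathrm{Hom}_{\orb_G}(C_n(X^{(-)}, A^{(-)}), \underbar{N})$ in place of the tensor product.

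The main obstacle is in step 4: one must verify that the cellular boundary in $h_*$, defined a priori by attaching maps of orbit-type cells, acts through the coefficient system $\underbar{h}_0$ via the \emph{morphisms} of $\orb_G$ and not merely through its values on objects. Concretely, one needs the isomorphism $h_n(X^{(n)} \cup A, X^{(n-1)} \cup A) \cong \bigoplus_\alpha \underbar{h}_0(G/H_\alpha)$ to be natural in a way compatible with the functor structure of $\underbar{h}_0$, so that the connecting maps produce precisely the Bredon differential. This compatibility is the substantive content of the equivariant Eilenberg--Steenrod theorem and forms the core of the classical argument referenced in \cite[\S I.3--I.4]{May1996Alaskanotes}.
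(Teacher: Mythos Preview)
The paper does not prove this theorem. It is stated as background, with a pointer to \cite[\S I.3]{May1996Alaskanotes} for details, followed only by two remarks: one recalling the definition of a $G$-CW pair, and one sketching a \emph{singular} construction of Bredon chains,
\[
C^{Br}_n(X;\underbar{M}) = \bigoplus_{[\sigma] \in S_n(X)/G} \underbar{M}(\stab(\sigma)),
\]
with the boundary coming from the observation that the stabilizer of a simplex is contained in the stabilizer of each face. No argument for parts (2)--(4) is given in the paper.

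Your outline is a correct and standard route to the result, and in fact it is essentially the argument in the reference the paper cites. The only noteworthy difference from what little the paper does say is that you build Bredon homology \emph{cellularly} via $C_n(X^{(-)},A^{(-)}) \otimes_{\orb_G} \underbar{M}$, whereas the paper's remark uses the singular model. The cellular model makes the dimension axiom and the comparison in part (4) transparent, at the cost of needing $G$-CW approximation and the equivariant Whitehead theorem to handle arbitrary $G$-spaces and to get part (3); the singular model is defined on all $G$-spaces from the outset and gives weak-equivalence invariance more directly, but the identification with $h_*$ on $G$-CW pairs then passes through the cellular skeletal filtration anyway. Your identification of the genuine content---that the connecting maps of the skeletal triples act through the \emph{morphisms} of $\orb_G$ on $\underbar{h}_0$---is exactly the point that needs care, and you have located it correctly.
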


\begin{remark}
    Recall that the pair $(X,A)$ is a $G$-CW pair, if $X$ is a $G$-CW complex, and $A$ is a $G$-CW subcomplex.
\end{remark}

\begin{remark}
    Bredon (co)homology can be constructed in a manner similar to singular homology: for a $G$-space $X$, we have a $G$-set $S_n(X)$ of continuous maps $\sigma\co \Delta^n \to X$, and we define
    \[
    C^{Br}_n(X;\underbar M) = \bigoplus_{[\sigma] \in S_n(X) / G} \underbar M(\stab(\sigma)).
    \]
    This has an appropriate alternating-sign simplicial boundary map, using the fact that the stabilizer of a singular simplex is contained in the stabilizer of any face.
\end{remark}

\begin{remark}
    Even though Borel equivariant (co)homology is not ordinary, it can be defined using hyper(co)homology. Given a chain complex $\underbar M_*$ of Bredon coefficient systems, there is an associated double complex
    \[
    C_n^{Br}(X; \underbar M_*) = \bigoplus_{[\sigma] \in S_n(X) / G} \underbar M_*(\stab(\sigma))
    \]
    producing an equivariant homology theory on $G$-spaces. Similar results apply for a cochain complex $\underbar N^*$.
\end{remark}

\subsection{Filtrations and spectral sequences}

Suppose $X$ is a $G$-space with a filtration
\[
\emptyset \subset F^0 X \subset F^1 X \subset \dots \subset X
\]
by $G$-invariant subspaces. Then, for any equivariant homology theory $h_*$, we get interconnected long exact sequences:
\[
\dots \to h_*(F^{p-1} X) \to h_*(F^p X) \to h_*(F^p X, F^{p-1} X) \to h_{*-1} (F^{p-1} X) \to \dots
\]
Similarly, there are long exact sequences for an equivariant cohomology theory $h^*$. These ``exact couples'' immediately give rise to the following result \cite{Boardman1999Conditional}.
\begin{proposition}
\label{prop:filtrationsseq}
    Suppose that $X$ has a filtration by subspaces $F^p X$. If $h_*$ is a homology theory such that
    \[
    \varinjlim_p h_* F^p X \to h_*(X)
    \]
    is an isomorphism, then there is a strongly convergent, homologically graded spectral sequence with $E^1$-term
    \[
    E^1_{p,q} = h_{p+q}(F^p X, F^{p-1} X) \Rightarrow h_{p+q} X.
    \]
    Similarly, if $h^*$ is a cohomology theory, then there is a conditionally convergent, cohomologically graded spectral sequence with $E_1$-term
    \[
    E_1^{p,q} = h^{p+q}(F^p X, F^{p-1} X) \Rightarrow h^{p+q} X.
    \]
\end{proposition}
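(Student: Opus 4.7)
The plan is to build the exact couple attached to the filtration in the usual way and to invoke standard results on conditional and strong convergence of spectral sequences derived from exact couples, as developed in Boardman's paper \cite{Boardman1999Conditional}.

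First I would set up the bigraded exact couple. Define
\[
D^1_{p,q} = h_{p+q}(F^p X),\qquad E^1_{p,q} = h_{p+q}(F^p X, F^{p-1} X),
\]
with the three structure maps $i\co D^1_{p-1,q+1} \to D^1_{p,q}$ induced by the inclusion $F^{p-1}X \hookrightarrow F^p X$, $j\co D^1_{p,q} \to E^1_{p,q}$ induced by the projection of pairs, and $k\co E^1_{p,q} \to D^1_{p-1,q+1}$ equal to the connecting homomorphism $\partial$. Exactness of these three maps at each vertex is exactly the long exact sequence of the pair $(F^pX, F^{p-1}X)$, which is available since $h_*$ is an equivariant homology theory. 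This data is a bigraded exact couple in the sense of Boardman.

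Next I would derive the spectral sequence by the standard iterative procedure: setting $d^1 = j \circ k\co E^1_{p,q} \to E^1_{p-1,q}$ and forming $E^2 = \ker d^1 / \operatorname{im} d^1$, together with the derived couple $D^2_{p,q} = i(D^1_{p-1,q+1}) \subset D^1_{p,q}$ and its structure maps. Iterating produces pages $(E^r, d^r)$ with $d^r\co E^r_{p,q} \to E^r_{p-r,q+r-1}$, giving a homologically graded spectral sequence with $E^1$-term as stated. For the cohomological case, the same machinery applied to the cohomological long exact sequences of the pairs $(F^pX, F^{p-1}X)$ produces a cohomologically graded exact couple, whose derived pages yield differentials $d_r\co E_r^{p,q} \to E_r^{p+r,q-r+1}$.

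For convergence I would use Boardman's criteria. Since $D^\infty_{p,q} := \bigcap_r i^r(D^1_{p-r,q+r}) = 0$ would need to be verified together with the fact that the induced filtration on $h_*(X)$ has $\bigcup_p \operatorname{im}(h_*(F^pX) \to h_*(X)) = h_*(X)$: this last equality is exactly the hypothesis that $\varinjlim_p h_*(F^pX) \to h_*(X)$ is an isomorphism. Combined with the fact that the filtration is bounded below (vanishing for $p<0$), which makes each $i\co D^1_{p-1,q+1} \to D^1_{p,q}$ eventually zero for fixed $(p,q)$ and $p$ small, one obtains that the spectral sequence converges strongly to $h_*(X)$ with filtration quotients $E^\infty_{p,q} = F_p h_{p+q}(X)/F_{p-1} h_{p+q}(X)$. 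In the cohomological setting one does not have a dual colimit hypothesis available, and so only the weaker statement of conditional convergence in the sense of \cite{Boardman1999Conditional} is claimed.

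The main obstacle, and the step that deserves the most care, is the convergence argument: setting up the exact couple and its derived sequence is purely formal, but verifying the hypotheses of Boardman's strong/conditional convergence theorems (in particular identifying the associated filtration on $h_*(X)$ and ruling out lim-one obstructions in the homological case using the colimit hypothesis) is what makes the spectral sequence useful rather than merely formal.
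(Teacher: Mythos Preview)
Your proposal is correct and follows the same approach as the paper: the paper does not give a proof but simply states that the interconnected long exact sequences of the pairs $(F^pX, F^{p-1}X)$ form exact couples which ``immediately give rise to the following result,'' citing \cite{Boardman1999Conditional}. Your write-up is a faithful unpacking of exactly that reference, setting up the bigraded exact couple and appealing to Boardman's convergence criteria.
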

\begin{remark}
    All convergence is automatic if the filtration is finite. 
\end{remark}

\begin{example}
\label{ex:homological}
    If $X$ is a $G$-CW complex, then we can apply this result to the cellular filtration $\emptyset \subset X^{(0)} \subset X^{(1)} \subset \dots$. By standard excision techniques, we get an isomorphism:
    \begin{align*}
    h_{p+q}(X^{(p)}, X^{(p-1)})
    &\cong \bigoplus_{[\alpha]} h_{p+q}(G/\stab(\alpha) \times D^p, G/\stab(\alpha) \times S^{p-1})\\
    &\cong \bigoplus_{[\alpha]} \widetilde h_{p+q}(\Sigma^p G/\stab(\alpha))\\
    &\cong \bigoplus_{[\alpha]} \underbar h_q(\stab(\alpha))
    \end{align*}
    Here the sums are over orbits of $p$-cells, and $\underbar h_q$ are the coefficient systems associated to $h_*$. The result is an Atiyah--Hirzebruch style spectral sequence:
    \[
    H_p^{Br}(X; \underbar h_q) \Rightarrow h_{p+q}(X).
    \]
    Similar results hold for cohomology.
\end{example}

\subsection{Equivariant Morse homology}

The following tool is handy.
\begin{lemma}[2-out-of-6]
    Suppose that we have maps $X \xrightarrow{f} Y \xrightarrow{g} Z \xrightarrow{h} W$ in a category $C$ such that the composites $gf\co X \to Z$ and $hg\co Y \to W$ are isomorphisms. Then $f$, $g$, and $h$ are isomorphisms.
\end{lemma}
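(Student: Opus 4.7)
The plan is to first show that the middle map $g$ is an isomorphism, and then deduce the claims about $f$ and $h$ by cancelling against the known isomorphisms $gf$ and $hg$. The only tool I expect to need is the elementary observation that in any category, whenever a composite $\beta\alpha$ is an isomorphism, the map $\alpha$ admits a left inverse $(\beta\alpha)^{-1}\beta$ (so $\alpha$ is a split monomorphism) and the map $\beta$ admits a right inverse $\alpha(\beta\alpha)^{-1}$ (so $\beta$ is a split epimorphism).

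First I would apply this observation to the hypothesis that $gf$ is an isomorphism. This simultaneously exhibits $f$ as a split monomorphism and exhibits $g$ as a split epimorphism with section $r := f(gf)^{-1}$. Next I would apply the same observation to the hypothesis that $hg$ is an isomorphism, which exhibits $g$ as a split monomorphism with retraction $\ell := (hg)^{-1}h$ and exhibits $h$ as a split epimorphism.

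At this point $g$ possesses both a left inverse $\ell$ and a right inverse $r$, and the standard one-line identity
\[
\ell = \ell \circ (g \circ r) = (\ell \circ g) \circ r = r
\]
shows that $\ell = r$ is a two-sided inverse, so $g$ is an isomorphism. Finally I would use $g^{-1}$ to cancel: the equations $f = g^{-1} \circ (gf)$ and $h = (hg) \circ g^{-1}$ present $f$ and $h$ as composites of isomorphisms, finishing the proof.

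There is no genuine obstacle here; the entire argument is a repackaging of the classical fact that any morphism which is simultaneously a split monomorphism and a split epimorphism is already an isomorphism. The role of the lemma in the surrounding discussion is presumably to propagate isomorphisms along a chain of comparison maps between the Morse and Bredon constructions, and its convenience is that one only has to verify that two alternating composites — rather than each individual map — are isomorphisms.
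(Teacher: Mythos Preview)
Your argument is correct and is essentially the same as the paper's: the paper also observes that $g$ has left inverse $(hg)^{-1}h$ and right inverse $f(gf)^{-1}$, hence is invertible, with the conclusions for $f$ and $h$ left implicit. You have simply spelled out the standard $\ell = \ell(gr) = (\ell g)r = r$ step and the cancellation for $f$ and $h$ that the paper omits.
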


\begin{proof}
    The map $g$ has a left inverse $(hg)^{-1} h$ and a right inverse $f(fg)^{-1}$, and so it is invertible.
\end{proof}

In particular, taking $C$ to be the the homotopy category of $G$-spaces, if $f$, $g$, and $h$ are maps of $G$-spaces such that $gf$ and $hg$ are equivariant homotopy equivalences, then $f$, $g$, and $h$ are equivariant homotopy equivalences.

\begin{lemma}
\label{lemma:flowlemma}
    Suppose $M$ is a closed $G$-manifold with an equivariant Morse function $f$ and a $G$-invariant Riemannian metric $g$, with associated gradient flow $(p,t) \mapsto \phi_t(p)$.

    Suppose $Z \subset Y \subset M$ is an inclusion of two such subsets such that $f$ has no critical points in $\overline Y \setminus Z^\circ$, and such that $Y \setminus Z \subset f^{-1}([a,b])$. Then the inclusion $Z \to Y$ is an equivariant homotopy equivalence.
\end{lemma}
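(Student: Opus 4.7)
The plan is to deformation-retract $Y$ onto $Z$ via a cut-off, time-rescaled negative gradient flow of $f$. Since $f$ and $g$ are $G$-invariant, the resulting homotopy will automatically be $G$-equivariant.

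First, I would extract a uniform speed bound. The set $K := \overline{Y} \setminus Z^\circ$ is closed in the compact manifold $M$, hence compact, and is $G$-invariant. By hypothesis it contains no critical points of $f$, so continuity of $\|\nabla f\|$ yields a constant $c > 0$ with $\|\nabla f\|^2 \geq c$ on $K$. Next, I would construct a smooth $G$-equivariant vector field $V$ on $M$ that equals $-\nabla f/\|\nabla f\|^2$ on a $G$-invariant neighborhood of $K$, vanishes on $Z$ and near $\crit(f) \setminus K$, and is supported in a small neighborhood of $\overline{Y}$. Such a $V$ is assembled by multiplying $-\nabla f/\|\nabla f\|^2$ by a scalar bump $\rho\colon M \to [0,1]$ and then averaging over $G$. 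Along the flow $\psi_t$ of $V$ one has $\tfrac{d}{dt} f(\psi_t(y)) = -\rho(\psi_t(y))$, so $f$ is nonincreasing along $\psi_t$ and decreases at unit rate as long as the trajectory lies in $K$.

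Setting $T := b - a$, I claim that $\psi_T(Y) \subset Z$. For $y \in Z$ the trajectory is constant. For $y \in Y \setminus Z$, we have $f(y) \leq b$; if the trajectory remained in $K$ throughout $[0,T]$, then $f(\psi_T(y)) \leq a$, contradicting $Y \setminus Z \subset f^{-1}([a,b])$ unless $\psi_T(y) \in Z$. Hence the trajectory exits $K$ by time $T$; because the support of $V$ keeps trajectories inside a neighborhood of $\overline Y$ and $K = \overline{Y} \setminus Z^\circ$, the only way out of $K$ from inside $\overline{Y}$ is into $Z^\circ \subset Z$.

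The homotopy $H\colon Y \times [0,1] \to Y$ given by $H(y,s) := \psi_{sT}(y)$ then satisfies $H_0 = \mathrm{id}_Y$, $H_1(Y) \subset Z$, and $H_s|_Z = \mathrm{id}_Z$. Combined with $G$-equivariance of $V$, this exhibits $Z \hookrightarrow Y$ as an equivariant strong deformation retract, hence an equivariant homotopy equivalence. The main technical obstacle is the joint design of the cutoff $\rho$: it must simultaneously keep trajectories from escaping a neighborhood of $\overline Y$, permit $f$ to drop at unit rate throughout $K$, and shut off smoothly on $Z$ and near $\crit(f)$. This is precisely where the two hypotheses---no critical points in $\overline{Y} \setminus Z^\circ$, and $Y \setminus Z$ trapped in $f^{-1}([a,b])$---must be used together.
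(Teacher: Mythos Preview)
Your construction has a genuine gap, and it stems from overlooking a hypothesis hidden in the phrase ``two \emph{such} subsets'': in the paper this refers back to the condition that $Y$ and $Z$ are forward-invariant under the gradient flow, i.e.\ $\phi_t Y\subset Y$ and $\phi_t Z\subset Z$ for $t\ge 0$. Without that hypothesis your cutoff $\rho$ cannot exist as described. You ask that $\rho\equiv 1$ on a neighborhood of $K=\overline{Y}\setminus Z^\circ$ and $\rho\equiv 0$ on $Z$, but these sets are not disjoint: $K\cap Z=\overline{Y}\cap(Z\setminus Z^\circ)$ contains the part of $\partial Z$ lying in $\overline{Y}$. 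So $\rho$ would have to be simultaneously $1$ and $0$ there, and in particular $H_s|_Z=\mathrm{id}_Z$ cannot hold while $f$ still drops at unit rate throughout $K$. A second problem is that you only arrange $\operatorname{supp}V$ to lie in a neighborhood of $\overline{Y}$; nothing then forces $\psi_{sT}(y)\in Y$ for $y\in Y$, so your map $H$ need not take values in $Y$ at all. Your closing paragraph correctly flags the joint design of $\rho$ as the crux, but the obstruction is not merely technical---with only the hypotheses you are using, it is genuinely unresolvable.

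With the flow-invariance hypothesis in hand, the paper takes a cleaner route that avoids cutoffs entirely. One uses the \emph{unmodified} gradient flow $\phi_t$: flow-invariance gives that $\phi_t Y\hookrightarrow Y$ and $\phi_t Z\hookrightarrow Z$ are equivariant deformation retracts for every $t\ge 0$. The no-critical-points and $f^{-1}([a,b])$ hypotheses combine, via a lower bound $\|\nabla f\|^2\ge\delta>0$ on $\overline{Y}\setminus Z^\circ$, to force $\phi_T Y\subset Z$ for $T=(b-a)/\delta$. Then the $2$-out-of-$6$ lemma applied to $\phi_T Z\subset\phi_T Y\subset Z\subset Y$ shows $Z\hookrightarrow Y$ is an equivariant homotopy equivalence. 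Note this does \emph{not} produce a deformation retraction fixing $Z$ pointwise---only a homotopy equivalence---which is exactly why no cutoff is needed.
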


\begin{proof}
    Invariance of the Morse function $f$ and of the metric $g$ implies that $\phi_t$ is equivariant. Note that the gradient flow satisfies $\tfrac{\partial}{\partial t}(f(\phi_t(x)) = ||\nabla f(\phi_t x)||^2$. 
 
    Suppose that $Y \subset M$ is any subset such that $\phi_t Y \subset Y$ for $t \geq 0$. Then the inclusion $\phi_t Y \to Y$ is then an equivariant deformation retract: the equivariant homotopy is $H(s,x) = \phi_{ts} x$.
    
    Suppose that $Z \subset Y \subset M$ is an inclusion of two such subsets such that $\phi_t Y \subset Z$ for sufficiently large $t$. We can then apply the 2-out-of-6 lemma to the composite
    \[
        \phi_t Z \subset \phi_t Y \subset Z \subset Y.
    \]
    We find that the inclusion $Z \to Y$ is an equivariant homotopy equivalence.

    Suppose $Z \subset Y \subset M$ is an inclusion of two such subsets such that $f$ has no critical points in $\overline Y \setminus Z^\circ$. Then on the closed set $\overline Y \setminus Z^\circ$, $||\nabla f(x)||^2$ is positive, and so by compactness it has a lower bound $\delta > 0$. Then for all $x \in Y$ and all $t > 0$, either $\phi_t x \in Z$ or $f(\phi_t(x)) \leq f(x) - t \delta$ for all $t \geq 0$.
    
    In particular, if $Y \setminus Z \subset f^{-1}([a,b])$, then $f^{-1}(-\infty,a] \subset Z \subset Y \subset f^{-1}(-\infty,b]$, and so then this forces $\phi_{(b-a)/\delta} Y \subset Z$.
\end{proof}

\begin{proposition}
    Suppose that $M$ is a closed $G$-manifold equipped with an equivariant Morse function $f$.
    
    \begin{enumerate}
        \item For any $a < b$ such that $f$ has no critical values in $[a,b]$, the inclusion $f^{-1}(-\infty,a] \to f^{-1}(-\infty,b]$ is an equivariant homotopy equivalence.
        \item Suppose that $a < c$ and $c$ is the only critical value of $f$ in $[a,c]$, with associated critical points $p_1,\dots,p_n$. Then there exist arbitrarily small open coordinate balls $B_i$ around $p_i$, there is an inclusion
        \[
        f^{-1}(-\infty,a] \to f^{-1}(-\infty,c] \setminus (\cup_{i=1}^n B_i)
        \]
        which is an equivariant homotopy equivalence.
        \item Suppose that $c < b$ and $c$ is the only critical value of $f$ in $[c,b]$, with associated critical points $p_1,\dots,p_n$. Then there exist arbitrarily small closed coordinate balls $\bar B_i$ around $p_i$, there is an inclusion
        \[
        f^{-1}(-\infty,c] \cup (\cup_{i=1}^n \overline B_i) \to f^{-1}(-\infty,b]
        \]
        which is an equivariant homotopy equivalence.
        \item Suppose that $a < c < b$ and $c$ is the only critical value of $f$ in $[a,b]$, with associated critical points $p_1,\dots,p_n$. Then passage across the critical value is homotopy equivalent to equivariant cell attachment: there is an equivariant homotopy equivalence
        \[
        f^{-1}(\infty,b] \simeq f^{-1}(-\infty,a] \cup_{\cup S(T_{p_i} \mathscr D)} \bigcup D(T_{p_i} \mathscr D)
        \]
    \end{enumerate}
\end{proposition}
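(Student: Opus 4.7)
The approach is to obtain parts (1)--(3) as direct instances of the flow lemma (Lemma~\ref{lemma:flowlemma}), and to deduce part (4) from (3) by an equivariant handle-collapse argument based on the equivariant Morse lemma.

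For each of (1)--(3), the plan is to specify an inclusion $Z\subset Y\subset M$ and verify the two hypotheses of Lemma~\ref{lemma:flowlemma}: that $\overline Y\setminus Z^\circ$ contains no critical points of $f$, and that $Y\setminus Z$ lies in some finite $f$-strip. For (1), I take $Z = f^{-1}(-\infty,a]$ and $Y=f^{-1}(-\infty,b]$, so $\overline Y\setminus Z^\circ = f^{-1}([a,b])$ is critical-point-free by hypothesis and $Y\setminus Z\subset f^{-1}([a,b])$. For (2), I take $Z = f^{-1}(-\infty,a]$ and $Y=f^{-1}(-\infty,c]\setminus\bigcup_i B_i$; the critical points $p_i$ lie in the open balls $B_i$ once these are small enough, hence outside $\overline Y$, while no other critical points of $f$ have value in $[a,c]$. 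For (3), I take $Z=f^{-1}(-\infty,c]\cup\bigcup_i\overline B_i$ and $Y=f^{-1}(-\infty,b]$; each $p_i$ lies in the interior of $\overline B_i\subset Z^\circ$, and every other critical point in $Y$ has $f$-value strictly less than $c$, hence lies in $f^{-1}(-\infty,c)\subset Z^\circ$. Boundedness of $Y\setminus Z$ in $f$-value is automatic in each case.

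For part (4), my first move is to observe that the proof of (3) extends with no change to any sufficiently small $\stab(p_i)$-equivariant closed neighborhood of $p_i$ whose boundary avoids the critical set, not just a round coordinate ball. I then invoke the equivariant Morse lemma cited earlier to obtain, near each $p_i$, $\stab(p_i)$-equivariant coordinates $(v,w)\in V_i\oplus W_i$ in which $f(v,w)=\|v\|^2-\|w\|^2+c$ and $W_i=T_{p_i}\mathscr D$. Choosing $\overline B_i=\{(v,w):\|v\|\leq\epsilon,\ \|w\|\leq\delta\}$ with $\delta^2>c-a>\epsilon^2$, the intersection $\overline B_i\cap f^{-1}(-\infty,a]$ is the ``lower'' collar $\{\|w\|^2-\|v\|^2\geq c-a\}$ of the polydisc. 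With these choices, the extended form of (3) reduces part (4) to producing an $\stab(p_i)$-equivariant homotopy equivalence
\[
f^{-1}(-\infty,a]\cup\overline B_i\ \simeq\ f^{-1}(-\infty,a]\cup_{S(W_i)} D(W_i),
\]
where the attaching map $S(W_i)\to f^{-1}(-\infty,a]$ is defined by following gradient flow lines from the core disc $\{0\}\times D(W_i)\subset\overline B_i$.

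The main obstacle is constructing this last equivariant homotopy equivalence. I would build it via a Milnor-style strong deformation retraction of $\overline B_i$ onto the union of the core disc $\{0\}\times D(W_i)$ with the attaching collar $\overline B_i\cap f^{-1}(-\infty,a]$: shrink the $V_i$-factor toward the origin, damped by a smooth cutoff so as to fix the collar pointwise and to leave $W_i$-directions untouched on the core. Since the cutoff can be chosen to depend only on the invariants $\|v\|^2$ and $\|w\|^2$, the retraction is automatically $\stab(p_i)$-equivariant. Extending by the identity outside $\overline B_i$ and summing over the $G$-orbits of critical points then yields the desired $G$-equivariant homotopy equivalence, with each $G$-orbit of critical points contributing the attachment of an orbit $G\times_{\stab(p_i)} D(W_i)$ whose underlying summand is $D(T_{p_i}\mathscr D)$, as claimed in (4).
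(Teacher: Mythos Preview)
Your plan for (1)--(3) matches the paper's, but you have missed a hypothesis of Lemma~\ref{lemma:flowlemma}: the phrase ``two such subsets'' in its statement refers to subsets closed under the flow $\phi_t$ for $t\ge 0$, as the proof of the lemma makes explicit. Without flow-invariance the lemma is false---take $Y=f^{-1}(-\infty,b]$ with no critical values in $[a,b]$ and $Z=f^{-1}(-\infty,a]\sqcup D$ for a small closed ball $D\subset f^{-1}(a,b)$; your two conditions hold, yet $Z$ has an extra component. For part (1) this is harmless since sublevel sets are automatically flow-invariant, but in (2) the set $Y=f^{-1}(-\infty,c]\setminus\bigcup B_i$ is \emph{not} forward-invariant for an arbitrary equivariant metric (points on the stable manifold of $p_i$ flow into $B_i$), and the analogous issue arises for $Z$ in (3) and for your polydiscs in the ``extended form of (3)''. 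The paper handles this by first choosing the metric to be standard Euclidean on a Morse chart near each $p_i$ and taking $B_i$ to be a round $\epsilon$-ball there; one then verifies directly from the linear flow $(v,w)\mapsto(e^{-2t}v,e^{2t}w)$, together with the constraint $\|v\|^2\le\|w\|^2$ coming from the sublevel set, that the relevant sets are preserved. Your proposal never fixes a metric, so this step is absent.

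For part (4), once that gap is closed, your Milnor-style retraction of the polydisc onto the core descending disc union the attaching collar is a sound classical alternative, and your remark that a cutoff depending only on $\|v\|^2$ and $\|w\|^2$ makes it $\stab(p_i)$-equivariant is correct. The paper instead argues diagrammatically: it writes down a ladder of inclusions whose horizontal maps are the equivalences from (2), (3), and one further application of the flow lemma (removing $B_i\setminus D_i$ rather than all of $B_i$, where $D_i=\{(0,w)\}$ is the descending core), so that the inclusion $f^{-1}(-\infty,a]\hookrightarrow f^{-1}(-\infty,b]$ is identified directly with the cell-attachment pair without constructing a retraction by hand. Either route works once flow-invariance is in place.
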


In short: an equivariant Morse function on $M$ gives us a description (up to homotopy equivalence) of $M$ via iterated \emph{representation cell attachment}. For each critical point $p$ of $M$, the descending tangent space $T_p \mathscr D$ is a representation of $\stab(p)$, and $M$ is formed by iteratively gluing together equivariant cells associated to the unit discs in these representations.

\begin{proof}
    These results are all applications of Lemma~\ref{lemma:flowlemma}.
    \begin{enumerate}
        \item Choosing any equivariant metric $g$, this follows by applying the lemma to the inclusion
        \[
        f^{-1} (-\infty,a] \subset f^{-1} (-\infty,b].
        \]
        \item In a neighborhood of each point $p_i$, choose an equivariant Euclidean coordinate chart $U_i \cong V_i \oplus W_i$ around $p_i$ such that in these coordinates, $f(v,w) = ||v||^2 - ||w||^2$. Let $g$ be the standard Euclidean metric near the points $p_i$, extended to a metric on all of $M$.
        
        Choose $B_i$ to be a sufficiently small $\epsilon$-ball around $p_i$ in this metric so that $f(x) > a$ on $B_i$. The gradient flow of $f$ is the standard Euclidean gradient flow on $B_i$. The result follows by applying the lemma to the inclusion
        \[
        f^{-1}(-\infty,c] \setminus (\cup_{i=1}^n B_i) \subset f^{-1}(-\infty,a]
        \]
        because the first space is closed under $\phi_t$.
        \item This follows from choosing local Euclidean metrics just as in the previous case.
        \item Choosing an equivariant metric as in the previous two cases: this has balls $B_i$ on which the equivariant Morse function is of the form $f(v,w) = ||v||^2 - ||w||^2$, and let $D_i \subset B_i$ be the descending part of points of the form $(0,w)$ in $B_i$. There is a diagram
        \[
        \adjustbox{scale=0.75}{%
        \begin{tikzcd}
            f^{-1}(-\infty,a] \ar[d,hookrightarrow] &
            f^{-1}(-\infty,a] \ar[l, equals] \ar[r] \ar[d,hookrightarrow] &
            f^{-1}(-\infty,c] \setminus (\cup B_i) \ar[d,hookrightarrow]  &
            f^{-1}(-\infty,c] \setminus (\cup B_i) \ar[d,hookrightarrow]  \ar[l,equals]
            \\
            f^{-1}(-\infty,b] &
            f^{-1}(-\infty,c] \cup (\cup \overline B_i) \ar[r, equals] \ar[l] &
            f^{-1}(-\infty,c] \cup (\cup \overline B_i) &
            f^{-1}(-\infty,c] \setminus (\cup (B_i \setminus D_i)) \ar[l,hookrightarrow]        \end{tikzcd}        
        }
        \]
        where the horizontal maps are equivariant homotopy equivalences. The inclusion $f^{-1}(-\infty,a] \hookrightarrow f^{-1}(-\infty,b]$ is therefore equivariantly homotopy equivalent to the right-hand inclusion, which is the desired equivariant cell attachment.
    \end{enumerate}
\end{proof}

This iterative cell attachment gives us a tool to calculate equivariant homology, based on the homology of representation cells.

\begin{definition}
    Suppose that $h_*$ is an equivariant homology theory. For any subgroup $H \leq G$ and any Euclidean $H$-representation $V$,  with unit disc $D(V)$ and unit sphere $S(V)$, we define the \emph{representation cell groups}
    \[
    h^H_n(V) = h_n(G \times_H D(V), G \times_H S(V))
    \]
    to be the homology of the associated free orbit of representation cells. Similarly, for an equivariant cohomology theory $h^*$, we get representation cell groups
    \[
    h^n_H(V) = h^n(G \times_H D(V), G \times_H S(V)).
    \]
\end{definition}

\begin{theorem}
    Suppose that $M$ is a closed $G$-manifold equipped with an equivariant Morse function $f$. Let $c_1 < c_2 < \dots < c_k$ be the critical values, let $p_{i,j}$ be chosen representatives for the critical orbits with critical values $c_i$, and $V_{i,j}$ the $\stab(p_{i,j})$-representation for the descending tangent space. Then for any equivariant homology theory $h_*$, there is a spectral sequence with $E_1$-term
    \[
    E^1_{n,m} = \bigoplus_{j} h^{\stab(p_{i,j})}_{n+m}(V_{n,j}) \Rightarrow h_{n+m}(M).
    \]
    Similarly, for any equivariant cohomology theory $h^*$, there is a spectral sequence
    \[
    E_1^{n,m} = \bigoplus_{j} h^{n+m}_{\stab(p_{i,j})} (V_{n,j}) \Rightarrow h^{n+m}(M).
    \]    
\end{theorem}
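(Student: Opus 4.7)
The plan is to apply the filtration spectral sequence of Proposition~\ref{prop:filtrationsseq} to the sublevel-set filtration of $M$ determined by $f$. Since the minimum and maximum of $f$ on a closed manifold are achieved at critical points, I may pick regular values $a_0 < c_1 < a_1 < c_2 < \cdots < c_k < a_k$ and set $F^n M = f^{-1}(-\infty, a_n]$ for $0 \le n \le k$. The result is a finite exhaustive $G$-invariant filtration
\[
\emptyset = F^0 M \subset F^1 M \subset \cdots \subset F^k M = M,
\]
and Proposition~\ref{prop:filtrationsseq} then produces a spectral sequence with $E^1_{n,m} = h_{n+m}(F^n M, F^{n-1} M) \Rightarrow h_{n+m}(M)$; convergence is automatic because the filtration is finite.

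The remaining work is to identify the relative terms $h_{n+m}(F^n M, F^{n-1} M)$ with the claimed direct sum of representation cell groups. By part~(1) of the preceding cell-attachment proposition, the inclusion $F^{n-1} M \hookrightarrow F^n M$ is equivariantly homotopy equivalent to $f^{-1}(-\infty, c_n - \epsilon] \hookrightarrow f^{-1}(-\infty, c_n + \epsilon]$ for small $\epsilon$, and by part~(4) this inclusion is in turn equivariantly homotopy equivalent to the attachment of the equivariant representation cells indexed by the critical points at level $c_n$. Organizing these critical points into $G$-orbits with representatives $p_{n,j}$ and stabilizers $H_{n,j} = \stab(p_{n,j})$, the attached cells assemble into $\bigsqcup_j G \times_{H_{n,j}} D(V_{n,j})$, attached along $\bigsqcup_j G \times_{H_{n,j}} S(V_{n,j})$. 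Equivariant excision and additivity then give
\[
h_{n+m}(F^n M, F^{n-1} M) \cong \bigoplus_j h_{n+m}\left(G \times_{H_{n,j}} D(V_{n,j}), G \times_{H_{n,j}} S(V_{n,j})\right) = \bigoplus_j h^{H_{n,j}}_{n+m}(V_{n,j}),
\]
matching the claimed $E^1$-term. The cohomological statement follows by the same argument using the cohomological version of Proposition~\ref{prop:filtrationsseq}.

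The main obstacle I anticipate is justifying the excision step, which requires that the cell-attachment description supplied by part~(4) of the cell-attachment proposition is genuinely excisive rather than merely a homotopy equivalence. This should follow from the fact that $(D(V), S(V))$ is an equivariant NDR pair for any orthogonal representation $V$ of a subgroup of $G$, so the attached portion can be replaced by an honest $G$-CW pushout before applying excision; alternatively, one can reinterpret part~(4) as exhibiting $F^n M$ as a homotopy pushout along an equivariant cofibration, for which the excision isomorphism holds directly.
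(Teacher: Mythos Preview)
Your proposal is correct and follows precisely the route the paper intends: the theorem is stated without proof in the paper, but the surrounding discussion makes clear that it is meant to follow by combining Proposition~\ref{prop:filtrationsseq} (applied to the sublevel-set filtration) with the cell-attachment proposition, which is exactly what you do. Your concern about excision is well-placed and your resolution is the standard one: since $S(V) \hookrightarrow D(V)$ is an equivariant cofibration, the pushout in part~(4) is a homotopy pushout and the relative homology agrees with that of the disc--sphere pair; equivalently, the zigzag of pair-level equivalences exhibited in the proof of part~(4) already gives the needed identification of relative groups.
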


\begin{example}
    Suppose that we have a manifold with boundary $N$ and we form the associated double $M = N \cup_{\partial N} N$, with action of the cyclic group $G = C_2$ by reflection across the boundary; the quotient $M/G$ is isomorphic to $N$. Choose a Morse function on $N$ transverse to the boundary, which determines a Morse function $f$ on $M$.
    
    Critical points of this Morse function come in three types.
    \begin{itemize}
        \item \emph{Interior} critical points from $N \setminus \partial N$, which come in $G$-equivalent pairs. The stabilizer is the trivial group, and the associated representation is $\R^{\morseIndex{f}{p}}$.
        \item \emph{Stable} critical points $p$ in $\partial N$, whose descending submanifold is contained in $N$. The stabilizer is $G$, and the associated representation of $G$ is a trivial representation $\R^{\morseIndex{f}{p}}$.
        \item \emph{Unstable} critical points in $\partial N$ whose descending submanifold is not contained in $N$. The stabilizer is $G$, and the associated representation of $G$ is the sum of a trivial representation $\R^{(\morseIndex{f}{p})-1}$ and the nontrivial one-dimensional representation, denoted $\R^-$.
    \end{itemize}
    Different equivariant homology theories give different values on these representation cells, and so they can appear in different ways on the $E_1$-page.
    \begin{itemize}
        \item The homology theory $X \mapsto H_*(X)$ sends a critical point of index $k$ to $\Z[G]$ in degree $k$ if it is interior, $\Z$ in degree $k$ if it is stable, and $\Z$ in degree $k$ if it is unstable.
        \item The homology theory $X \mapsto H_*(X^G)$ sends a critical point of index $k$ to $0$ if it is interior, $\Z$ in degree $k$ if it is stable, and $\Z$ in degree $(k-1)$ if it is unstable.
        \item The homology theory $X \mapsto H_*(X/G)$ sends a critical point of index $k$ to $\Z$ in degree $k$ if it is interior, $\Z$ in degree $k$ if it is stable, and $0$ if it is unstable. In particular, \emph{interior} and \emph{stable} critical points are the only contributors to $H_*(M/G) = H_*(N)$.
        \item The homology theory $X \mapsto H_*(X/G, X^G)$ sends a critical point of index $k$ to $\Z$ in degree $k$ if it is interior, $0$ if it is stable, and $\Z$ in degree $k$ if it is unstable. In particular, \emph{interior} and \emph{unstable} critical points are the only contributors to $H_*(M/G, M^G) = H_*(N, \partial N)$.
    \end{itemize}
    For these last two, the reader should compare \cite[Theorem 2.4.5]{Kronheimer2007Monopoles}.
\end{example}

\section{Equivariant Thom-Smale-Witten complex}

Consider a manifold $M$ equipped with an equivariant, stable Morse function $f$, and let $g$ be an equivariant Riemannian metric such that the pair $(f, g)$ is Morse--Smale.

% For any $p, q \in \crit(f)$, let $\mathscr D(p)$ denote the descending manifold of $p$, $\mathscr A(q)$ denote the ascending manifold of $q$, and $\mathcal M = \M{p}{q} = (\mathscr D(p) \cap \mathscr A)/\R$ be the moduli space of negative gradient trajectories from $p$ to $q$ obtained by quotienting out the translation action of $\R$ on the domain. The Morse--Smale condition implies that $\mathcal M$ is a manifold of dimension $\morseIndex{f}(p) - \morseIndex{f}(q) - 1$. % We proceed to define the orientation of $\mathcal M$. 
% }

\begin{definition}
For any $k$, we define $F^k M \subset M$ to be the union of descending manifolds of critical index at most $k$:
\[
F^k M = \bigcup_{\morseIndex{f}(p) \leq k} \D{p}.
\]    
\end{definition}

% \begin{proposition}
% The subspace $F^k M$ is a closed, $G$-invariant subspace of $M$.
% \end{proposition}

% \begin{proof}
% For any critical point $p$, let $\D{p}$ be the descending manifold of $p$.
% For any $\sigma \in G$, we have $\sigma \cdot \D{p} = \D{\sigma \cdot p}$, and so $F^k M$ is a $G$-invariant subspace of $M$.

% Because $\D{p}$ is closed under the gradient flow, so is its closure $\overline{\D{p}}$. This means that any point in the closure is in the descending manifold for some critical point $q$ in $\overline{\D{p}}$. The limit of gradient flow lines for a sequence of points approaching $q$ is then a gradient flow line from $p$ to $q$, and the Morse--Smale condition implies that the index of $q$ is less than or equal to the index of $p$.
% \end{proof}

By \cite[Theorem B]{qin2021morseCW}, which requires the pair $(f,g)$ being Morse-Smale, the spaces $F^k M$ are the skeleta in a CW-decomposition of the manifold $M$. Moreover, stability implies that the descending tangent space at $p$ is isomorphic to the trivial representation of $\stab(p)$. This makes $F^k M$ into a $G$-CW decomposition of $M$, and we get an associated spectral sequence from Example~\ref{ex:homological}.
\begin{proposition}
Suppose that $h_*$ is an equivariant homology theory. Then there is a strongly convergent, homologically graded spectral sequence with $E^1$-term
\[
E^1_{n,m} = \bigoplus_{[p]} \underbar h_m(\stab(\alpha)) \Rightarrow h_{n+m}(M)
\] 
Here the sum is over $G$-orbits of critical points $p$ of index $n$. The $E^2$-page is made up of the Bredon homology groups
\[
E^2_{n,m} = H_n^{Br}(M; \underbar h_m).
\]

Similarly, for an equivariant cohomology theory $h^*$, there is a strongly convergent, cohomologically graded spectral sequence with $E_1$-term
\[
E_1^{n,m} = \bigoplus_{[p]} \underbar h^m(\stab(\alpha)) \Rightarrow h^{n+m}(M)
\]
The $E_2$-page is made up of the Bredon cohomology groups
\[
E_2^{n,m} = H^n_{Br}(M; \underbar h^m).
\]
\end{proposition}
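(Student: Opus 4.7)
The plan is to combine three ingredients already available in the paper: the CW-decomposition result of \cite{qin2021morseCW}, the stability hypothesis on $f$, and the general filtration spectral sequence of Proposition~\ref{prop:filtrationsseq}. Once the filtration $\{F^k M\}$ is shown to be a $G$-CW filtration with orbits of cells indexed by $G$-orbits of critical points (with trivial stabilizer action on the cell), the desired spectral sequence is then an instance of Example~\ref{ex:homological}.

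First, I would invoke \cite[Theorem B]{qin2021morseCW} to conclude that for the Morse--Smale pair $(f,g)$, the spaces $F^k M$ are the $k$-skeleta of a CW-decomposition of $M$: each $k$-cell is the descending manifold $\D{p}$ of a critical point $p$ of index $k$, glued along $\partial \D{p}$ into $F^{k-1}M$. Next, I would upgrade this to a $G$-CW decomposition. The metric $g$ is $G$-invariant, so for any $\sigma \in G$ the negative gradient flow satisfies $\sigma \circ \phi_t = \phi_t \circ \sigma$, and consequently $\sigma \D{p} = \D{\sigma p}$. Hence $G$ permutes the cells and $F^k M$ is $G$-invariant. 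The stability hypothesis is the crucial input: for a stable critical point $p$ with stabilizer $H = \stab(p)$, the descending tangent space $T_p \D{p}$ is $H$-fixed, i.e.\ it is the trivial $H$-representation $\R^{\morseIndex{f}(p)}$. Therefore the $G$-orbit of cells through $p$ has the form $G/H \times D^{\morseIndex{f}(p)}$ rather than a more general representation cell $G \times_H V$, and the cellular fixed-point condition defining a $G$-CW complex is automatic.

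Having established the $G$-CW structure, I would apply Proposition~\ref{prop:filtrationsseq} to the filtration $\{F^k M\}$. Convergence is strong because the filtration is finite (finitely many critical points on a closed manifold). The identification of the $E^1$-page is the computation already performed in Example~\ref{ex:homological}: by equivariant excision
\[
h_{n+m}(F^n M, F^{n-1} M) \cong \bigoplus_{[p]} h_{n+m}\bigl(G/\stab(p) \times D^n,\, G/\stab(p) \times S^{n-1}\bigr) \cong \bigoplus_{[p]} \underline h_m(\stab(p)),
\]
where the sum is over $G$-orbits of critical points $p$ of index $n$ and stability is used to pass from the relative pair to the $n$-fold suspension of $G/\stab(p)$. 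The $E^2$-page is then identified with Bredon homology $H^{Br}_n(M;\underline h_m)$ by recognizing that the $d^1$-differential agrees with the cellular Bredon boundary, which holds because both are characterized by the cellular attaching maps of the $G$-CW structure just constructed. The cohomological statement is strictly dual.

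The main obstacle is the identification of the $E^2$-page, since a priori Proposition~\ref{prop:filtrationsseq} only supplies an abstract spectral sequence with an opaque $d^1$. The honest way to do this is to observe that the Bredon chain complex of a $G$-CW complex is, by definition (or by the construction recalled in the remark following the Bredon classification theorem), the $E^1$-complex of the cellular filtration for any ordinary theory with the given coefficient system, and that an arbitrary equivariant theory agrees with its Bredon approximation on $E^1$ termwise. Everything else is routine bookkeeping; the only place stability is genuinely needed is to reduce $G \times_H V$ to $G/H \times \R^n$ so that the filtration lands in the classical $G$-CW setting.
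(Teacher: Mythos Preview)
Your proposal is correct and follows essentially the same approach as the paper: invoke \cite[Theorem~B]{qin2021morseCW} to obtain a CW-decomposition via the filtration $\{F^k M\}$, use stability to conclude that each descending tangent space is a trivial $\stab(p)$-representation so that the cells have the form $G/\stab(p)\times D^n$ and the structure is genuinely $G$-CW, and then read off the spectral sequence from Example~\ref{ex:homological}. Your write-up is in fact more detailed than the paper's, which condenses the argument into the short paragraph immediately preceding the proposition; your extra care about the $d^1$-differential and the $E^2$-identification is a welcome elaboration rather than a departure.
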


\begin{remark}[Equivariant Thom-Smale-Witten complex -- a canonical approach]
    In the case where we take Bredon homology with coefficients in a covariant coefficient system $\underbar M$, then the terms in this spectral sequence are trivial for $m \neq 0$. In this case, we are left with a \emph{Thom-Smale-Witten complex}
    \[
    \dots 
    \to \bigoplus_{[p], \morseIndex{f}(p) = 1} \underbar M(\stab(p)) 
    \to \bigoplus_{[p], \morseIndex{f}(p) = 0} \underbar M(\stab(p)) \to 0
    \]
    which computes the ordinary Bredon homology groups $H_*^{Br}(X;\underbar M)$. From this point of view, we do not have to choose orientations and differentials to define a Thom-Smale-Witten complex or prove that it is invariant. 
\end{remark} 
\begin{remark}[Equivariant Thom-Smale-Witten complex -- an explicit approach]
    We review the standard equivariant Thom-Smale-Witten complex as follows:  
    for each critical point $p \in \crit(f)$, we fix an orientation of the descending manifold $\mathscr D_p$. Define the chain complex  
    \[ C_* = \bigoplus_{p \in \crit(f)} \mathbb{Z} \langle [p] \rangle. \]  
    The differential $\p$ is given on generators by  
    \[
    \p [p] = \sum_{\morseIndex{f}(q) = \morseIndex{f}(p) - 1} \sharp (\mathcal M(p,q)/\mathbb{R}) [q],
    \]
    where:
    \begin{enumerate}
        \item $\mathcal M(p,q)$ denotes the space of trajectories $\gamma: \mathbb{R} \to M$ satisfying  
         \[
         \dot \gamma = -\op{grad}_f(\gamma),
         \]
         with $\lim_{s\to -\infty}\gamma(s) = p$ and $\lim_{s \to \infty} \gamma(s) = q$;
         \item $t \in \R$ acts on $\mathcal M(p,q)$ by translation: $(\op{tr}_t \gamma) (s) = \gamma(s + t)$;
        \item By the Morse-Smale condition, $\mathcal M(p,q)/\mathbb{R}$ is a finite set;
        \item The space $\mathcal M(p,q)/\mathbb{R}$ is oriented as usual by comparing the orientations of the descending manifolds $\mathscr D_p$ and $\mathscr D_q$ (see, for example, \cite{salamonMorse});
        \item The signed count of points in $\mathcal M(p,q)/\mathbb{R}$ is denoted by $\sharp (\mathcal M(p,q)/\mathbb{R})$.
    \end{enumerate}  
    For any $\varphi \in G$, the differential $d\varphi$ of $\varphi$ induces a map $d\varphi|_p: T_p \mathscr D_p \to T_{\varphi p} \mathscr D_{gp}$. We define  
    \[
    \op{sign}(g,p) =  
    \begin{cases}  
    1, & \text{if } dg|_p \text{ preserves orientation}, \\  
    -1, & \text{if } dg|_p \text{ reverses orientation}.  
    \end{cases}  
    \]
    The group $G$ acts on $C_*$ via  
    \[
    g[p] = \op{sign}(g,p)[gp].
    \]  
    It is straightforward to check that this $G$-action commutes with $\p$ (see \cite{bao2021equivariant}).  
    \end{remark}

\begin{remark}
For a finite $p$-group $G$ and a $G$-space $X$ whose fixed-point spaces $X^H$ have finite-dimensional mod-$p$ homology, calculations in Bredon homology are enough to deduce the \emph{Smith inequalities} proved in \cite{floyd1952on}. For all $\ell$,
\[
\sum_{k \geq \ell} \dim H_k(X; \mathbb F_p)\geq \sum_{k \geq \ell} \dim H_k(X^G; \mathbb F_p).
\]
A proof of this is given by May in \cite{may1987smith}, and Putman elaborates on it in \cite{putman2018smith}.

Alternatively, we show a special case of the Smith inequalities using the equivariant Thom-Smale-Witten complex $(C_*, \p)$. Suppose $G$ is the cyclic group of order $p$, written as $G = \{1, \varphi, \varphi^2, \dots, \varphi^{p-1}\}$. Tensoring $(C_*, \p)$ with $\mathbb F_p$ over $\Z$, we denote the resulting chain complex still by $(C_*, \p)$.
The $G$-action means that $(C_*, \p)$ is an $\mathbb F_p[G]$-module. Consider the Tate double complex $(E_{*,*}, d) = \oplus_{i,j} E_{i,j}$ defined by $E_{i,j} = C_j$. The differential $d = d_\vee + d_<$, where 
\be 
\item $d_\vee: E_{i,j} \to E_{i, j-1}$ is the map $$c_j \mapsto (-1)^i \p c_j$$ for any $c_j \in C_j$; 
\item $d_<: E_{i,j} \to E_{i-1, j}$ is the map 
    \begin{equation*}
    c_j \mapsto 
    \begin{cases}
        (1-\varphi)c_j & \text{ if } i \text{ is odd}, \\
        (1+ \varphi + \dots + \varphi^{p-1})c_j & \text{ if } i \text{ is even,} 
    \end{cases}
    \end{equation*}
    for any $c_j \in C_j$.
\ee 
The vertical first spectral sequence of $(E_{*,*}, d)$ has $E_1$-term ${}^\vee E^1_{i,j} = H_j(X;\mathbb F_p)$ and it converges to the homology of $(E_{*,*}, d)$. This implies $\sum_k \dim H_k(X; \mathbb F_p) \geq \dim H_0(E_{*,*},d)$
The horizontal first spectral sequence of $(E_{*,*}, d)$ has the $E_2$-term ${}^<E^2_{i,j} = H_j(X^G; \mathbb F_p)$, converges to the homology of $(E_{*,*}, d)$, and collapses at the $E_2$-page. This implies $\sum_k \dim H_k(X^G; \mathbb F_p) = \dim H_0(E_{*,*},d)$. Putting these two equations together, we obtain:
\[
    \sum_k \dim H_k(X; \mathbb F_p) \geq \sum_k \dim H_k(X^G; \mathbb F_p).
\]
\end{remark}

\printbibliography
\end{document}